\documentclass[a4paper,11pt]{amsart}
\usepackage{amsmath,amssymb,amsfonts}
\usepackage{epsfig}
\usepackage{mathrsfs}
\usepackage{ae}
\usepackage[utf8]{inputenc}

\theoremstyle{definition}
\newtheorem{defn}{Definition}[section]

\theoremstyle{plain}
\newtheorem{thm}{Theorem}[section]
\newtheorem{prop}{Proposition}[section]
\newtheorem{cor}{Corollary}[section]
\newtheorem{lma}{Lemma}[section]
\theoremstyle{remark}
\newtheorem{rmk}{Remark}[section]
\newtheorem{exm}{Example}[section]

\newcommand{\R}{\mathbb{R}}
\newcommand{\E}{\mathbb{E}}
\renewcommand{\P}{\mathbb{P}}

\newcommand{\ud}{\mathrm{d}}

\newcommand{\la}{\langle}
\newcommand{\ra}{\rangle}

\numberwithin{equation}{section}

\font\eka=cmex10

\def\ind{\mathrel{\hbox{\rlap{%
\hbox to 7.5pt{\hrulefill}}\raise6.6pt\hbox{\eka\char'167}}}}

\setlength{\parskip}{.5ex plus .2ex minus .1ex}
\setlength{\emergencystretch}{1em} 


\begin{document}
\title[Gaussian Pathwise It\^o--Tanaka Formula]{Pathwise integrals and It\^o--Tanaka Formula for Gaussian processes}

\author[Sottinen]{Tommi Sottinen}
\address{Department of Mathematics and Statistics, University of Vaasa, P.O. Box 700, FIN-65101 Vaasa, FINLAND}

\author[Viitasaari]{Lauri Viitasaari}
\address{Department of Mathematics and System Analysis, Aalto University School of Science, Helsinki\\
P.O. Box 11100, FIN-00076 Aalto,  FINLAND} 

\thanks{L. Viitasaari was financed by the Finnish Doctoral Programme in Stochastics and Statistics.}

\begin{abstract}
We prove an It\^o--Tanaka formula and existence of pathwise stochastic integrals for a wide class of Gaussian processes.  Motivated by financial applications, we define the stochastic integrals as forward-type pathwise integrals introduced by Föllmer and as pathwise generalized Lebesgue--Stieltjes integrals introduced by Zähle. As an application, we illustrate importance of It\^o--Tanaka formula for pricing and hedging of financial derivatives.
\end{abstract}

\keywords{Föllmer integral;
Gaussian processes;
generalized Lebesgue--Stieltjes integral;
It\^o--Tanaka formula; 
mathematical finance;
pathwise stochastic integral.}

\subjclass[2010]{60G15; 60H05; 91G20}   

\maketitle


\section{Introduction}

Let $Y$ be a continuous Gaussian stochastic process on a time interval $[0,T]$ and let $f$ be a linear combination of convex functions. We are interested in which generality and for what kind of integrals and notion of local time $L_T^a$ we can obtain the It\^o--Tanaka formula
\begin{equation}\label{eq:ito-tanaka}
f(Y_T) = f(Y_0) + \int_0^T f'_-(Y_u)\, \ud Y_u + 
\frac12 \int_{-\infty}^\infty L_T^a(Y)\, f''(\ud a).
\end{equation}  
Since we do not assume that $Y$ is a semimartingale, standard stochastic integration theory cannot be applied here and we have to  determine in which sense the stochastic integral in (\ref{eq:ito-tanaka}) exists. 

Motivated by financial applications we consider pathwise integrals that are generalizations of the financially natural Riemann--Stieltjes integral.  These generalizations go back to Young \cite{y}. In particular, we consider the pathwise forward-type Riemann--Stieltjes integral introduced by Föllmer \cite{Follmer} and the pathwise generalized Lebesgue--Stieltjes integrals introduce by Zähle \cite{z} and further studied e.g. by Nualart and R\u{a}\c{s}canu \cite{n-r}. 

Let us note that for Gaussian processes one can also consider Skorokhod integrals and e.g. in the particular case of fractional Brownian motion the It\^o--Tanaka formula (\ref{eq:ito-tanaka}) is established in \cite{c-n-t}. However, the Skorokhod integrals do not admit an economical interpretation in any obvious way; see \cite{s-v} for details.

Our work is related to \cite{Azmoodeh,a-m-v,a-v,n-r}, where only the case of fractional Brownian motion was studied. We extend these results to a more general class of Gaussian processes. 
Bertoin \cite{Bertoin} also established the It\^o--Tanaka formula (\ref{eq:ito-tanaka}) in 
the Föllmer sense for a very general class of Dirichlet process. 
In that sense our work is merely a special case of \cite{Bertoin}.  
However, in \cite{Bertoin} it was a priori assumed that the local time $L_T^a(Y)$ exists 
as the Lebesgue density of the occupation measure in the $L^2$ sense.  This actually also implies the existence of the associated 
F\"{o}llmer integral. We do not assume the existence 
of the density or the integral a priori.  Also, the generalized Lebesgue--Stieltjes integrals were not 
studied in \cite{Bertoin}.  

The rest of the paper is organized as follows. In section \ref{sec:aux} we introduce generalized 
Lebesgue-Stieltjes integrals and Föllmer integrals. The main section \ref{sec:main} begins with introducing our 
assumptions with a discussion and examples. Then we prove several fundamental lemmas after which we state and prove our 
main results on the existence of the generalized Lebesgue--Stieltjes integrals, the Föllmer integrals and mixed 
Föllmer--generalized Lebesgue--Stieltjes integrals. Then we prove the It\^o--Tanaka formula.  In Section \ref{sect:finance} we discuss the importance of the It\^o--Tanaka formula for the hedging and pricing of financial derivatives.  Finally, a technical lemma on level-crossing probabilities of Gaussian processes is given in the appendix. 

\section{Pathwise Integrals} 
\label{sec:aux}

We recall two notions of pathwise stochastic integrals: the generalized Lebesgue--Stieltjes integral and the Föllmer integral.

\subsection*{Generalized Lebesgue--Stieltjes Integral}

We first recall definitions for fractional Besov norms and Lebesgue--Liouville fractional integrals and derivatives. For details on fractional integration we refer to \cite{s-k-m} and for fractional Besov spaces we refer to \cite{n-r}.

\begin{defn}
Fix $ 0 <\beta < 1 $.
\begin{enumerate}
\item
The \emph{fractional Besov space} $W^{\beta}_1 =  W^{\beta}_1 ([0,T])$ is the space of real-valued measurable functions $ f :[0,T] \to \mathbb{R}$ such that
\begin{equation*}
{\Vert f \Vert}_{1,\beta} = \sup _{0 \le s < t \le T} \left( \frac{|f(t) - f(s)|}{(t-s)^\beta} + \int _{s}^{t} \frac{|f(u) - f(s) |}{(u-s)^ {1+\beta }} \,\ud u \right) < \infty.
\end{equation*}
\item
The \emph{fractional Besov space} $W^{\beta}_2 =  W^{\beta}_2 ([0,T])$ is the space of real-valued measurable functions  $ f :[0,T] \to \mathbb{R}$ such that
\begin{equation*}
{\Vert f \Vert}_{2,\beta} = \int_{0}^{T} \frac{|f(s)|}{s^ \beta}\, \ud s + \int_{0}^{T}\int_{0}^{s} \frac{|f(u) - f(s) |}{(u-s)^ {1+\beta }} \,\ud u \ud s < \infty.
\end{equation*}
\end{enumerate}
\end{defn}

\begin{rmk}\label{r:rmk1}
Let $C^{\alpha}=C^{\alpha }([0,T])$ denote the space of H\"{o}lder continuous functions of order $\alpha$ on $[0,T]$ and let $ 0< \epsilon < \beta \wedge (1- \beta)$. Then 
\begin{center}
$C^{\beta + \epsilon} \subset W^{\beta}_{1} \subset C^{\beta} 
\quad\mbox{and}\quad C^{\beta + \epsilon} \subset W^{\beta}_{2}$.
\end{center}
\end{rmk}

\begin{defn}
Let $t\in[0,T]$. The \emph{Riemann--Liouville fractional integrals} $I^\beta _{0+}$ and $I^\beta_{t-}$ of order $\beta > 0$ on $[0,T]$ are
\begin{eqnarray*}
(I^\beta _{0+} f)(s) &=& \frac{1}{\Gamma (\beta)} \int _0^s f(u) (s-u)^{\beta -1} \,\ud u, \\
(I^\beta _{t-} f)(s) &=& \frac{(-1)^{-\beta}}{\Gamma (\beta)} \int_s^t f(u) (u-s)^{\beta -1} \,\ud u, 
\end{eqnarray*}
where $\Gamma$ is the Gamma-function.  The \emph{Riemann--Liouville fractional derivatives} $D^{\beta}_{0+}$ and $D^{\beta}_{t-}$ are the left-inverses of corresponding integrals $I^\beta_{0+}$ and $I^\beta_{t-}$. They can be also defined via \emph{Weyl representation} as
\begin{eqnarray*}
(D^{\beta}_{0+} f)(s) &=& 
\frac{1}{\Gamma(1-\beta)} \left( \frac{f(s)}{s^\beta} + \beta \int_{0}^{s}\frac{f(s) - f(u)}{(s-u)^{\beta + 1}}\,\ud u \right),\\
(D^{\beta}_{t-} f)(s) &=& 
\frac{(-1)^{-\beta}}{\Gamma(1-\beta)} \left( \frac{f(s)}{(t-s)^\beta} + \beta \int_{s}^{t}\frac{f(s) - f(u)}{(u-s)^{\beta + 1}} \,\ud u \right)
\end{eqnarray*}
if $f\in I^\beta_{0+}(L^1)$ or $f\in I^\beta_{t-}(L^1)$, respectively.
\end{defn}

Denote $g_{t-}(s) = g(s)-g(t-)$. 

The generalized Lebesgue--Stieltjes integral is defined in terms of fractional derivative operators according to the next proposition.

\begin{prop}\cite{n-r}\label{pr:n-r}
Let $0<\beta<1$ and let $f \in  W^{\beta}_2$ and $ g \in W^{1- \beta}_1$. Then for any $t \in(0,T]$ the \emph{generalized Lebesgue--Stieltjes integral} exists as the following Lebesgue integral
$$
\int_0^t f(s)\, \ud g(s) =
(-1)^{\beta}\int_{0}^{t} (D^{\beta}_{0+} f)(s) (D^{1- \beta}_{t-} g_{t-} )(s) \,\ud s
$$
and is independent of $\beta$.
\end{prop}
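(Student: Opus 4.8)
The plan is to establish two things: that the right-hand side is a well-defined Lebesgue integral, and that its value does not depend on $\beta$. The notation $\int_0^t f\,\ud g$ is then justified by checking agreement with the ordinary Riemann--Stieltjes integral on regular integrands.

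For existence, I would estimate the two fractional derivatives separately from their Weyl representations. The norm $\|f\|_{2,\beta}$ is tailored so that its two terms dominate the $L^1[0,t]$-norms of the two terms in $D^\beta_{0+}f$, giving $D^\beta_{0+}f\in L^1[0,t]$ with $\|D^\beta_{0+}f\|_{L^1[0,t]}\le C\|f\|_{2,\beta}$. Similarly, using $g_{t-}(s)-g_{t-}(u)=g(s)-g(u)$, the supremum defining $\|g\|_{1,1-\beta}$ bounds both terms of $D^{1-\beta}_{t-}g_{t-}$ pointwise, so $D^{1-\beta}_{t-}g_{t-}\in L^\infty[0,t]$ with $\|D^{1-\beta}_{t-}g_{t-}\|_\infty\le C\|g\|_{1,1-\beta}$. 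Pairing an $L^1$ factor with an $L^\infty$ factor, H\"older's inequality yields
\[
\Big|\int_0^t (D^\beta_{0+}f)(s)\,(D^{1-\beta}_{t-}g_{t-})(s)\,\ud s\Big|\le C\,\|f\|_{2,\beta}\,\|g\|_{1,1-\beta}<\infty,
\]
so the integral exists as a Lebesgue integral and defines a bilinear functional continuous in the Besov norms.

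For independence of $\beta$, take $0<\beta<\beta'<1$ and decompose the extra regularity through the semigroup property of the Riemann--Liouville operators from \cite{s-k-m}, writing $D^{\beta'}_{0+}f=D^{\beta'-\beta}_{0+}D^\beta_{0+}f$ and $D^{1-\beta}_{t-}g_{t-}=D^{\beta'-\beta}_{t-}D^{1-\beta'}_{t-}g_{t-}$. Substituting the second identity into the $\beta$-formula and applying the fractional integration-by-parts formula moves the operator $D^{\beta'-\beta}_{t-}$ off the second factor and onto the first as $D^{\beta'-\beta}_{0+}$; recomposing via the first identity turns the integrand into $(D^{\beta'}_{0+}f)(D^{1-\beta'}_{t-}g_{t-})$, which is precisely the $\beta'$-formula. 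The conventional constants $(-1)^{\mp\beta}$ attached to the right-sided operators are exactly those that make the integration-by-parts formula hold, so they track through consistently and the two expressions agree.

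The step I expect to be the main obstacle is this interchange: the semigroup and integration-by-parts identities are valid only on the correct domains $I^\gamma_{0+}(L^p)$, so I must verify that $D^\beta_{0+}f$ and $D^{1-\beta'}_{t-}g_{t-}$ fall into the required classes and that the attendant Fubini interchange of the $(u-s)^{\gamma-1}$ kernels is legitimate under the stated Besov regularity. Once this is in place, agreement with the Riemann--Stieltjes integral for regular $f,g$ (say $f\in C^{\beta+\epsilon}$ and $g\in C^{1-\beta+\epsilon}$ as in Remark \ref{r:rmk1}) follows by collapsing the two fractional operators through the composition $I^{1-\beta}_{0+}I^\beta_{0+}=I^1_{0+}$ into a single ordinary integration, which recovers $\int_0^t f\,\ud g$ and confirms that the notation is well chosen.
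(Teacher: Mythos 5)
The paper does not actually prove this proposition: it is quoted verbatim from Nualart--R\u{a}\c{s}canu \cite{n-r} (ultimately Z\"ahle \cite{z}), so there is no in-paper argument to compare against. Judged on its own terms, your proposal reconstructs the standard proof from those references. The existence half is complete and correct: the two terms of $\Vert f\Vert_{2,\beta}$ are exactly the $L^1[0,t]$-norms of the two terms in the Weyl representation of $D^{\beta}_{0+}f$, and since $g_{t-}(s)-g_{t-}(u)=g(s)-g(u)$ and $|g_{t-}(s)|\le \Vert g\Vert_{1,1-\beta}(t-s)^{1-\beta}$, the supremum defining $\Vert g\Vert_{1,1-\beta}$ bounds $D^{1-\beta}_{t-}g_{t-}$ uniformly; the $L^1$--$L^\infty$ pairing then gives both existence and, as a byproduct, the estimate of Theorem \ref{t:n-r}. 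Your route to $\beta$-independence (semigroup property plus fractional integration by parts) is also the right one, and the bookkeeping of the constants $(-1)^{\pm\beta}$ works out as you say, since $(-1)^{\beta}(-1)^{\beta'-\beta}=(-1)^{\beta'}$.

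The one step you flag but do not close is a real obligation, not a formality: you must show that $f\in W^{\beta'}_2$ implies $f\in I^{\beta'}_{0+}(L^1)$ (so that $D^{\beta}_{0+}f=I^{\beta'-\beta}_{0+}D^{\beta'}_{0+}f$ and $D^{\beta}_{0+}f$ lies in the domain $I^{\beta'-\beta}_{0+}(L^1)$ required by integration by parts), and the analogous statement $D^{1-\beta'}_{t-}g_{t-}=I^{\beta'-\beta}_{t-}D^{1-\beta}_{t-}g_{t-}$ with the latter in $I^{\beta'-\beta}_{t-}(L^\infty)$. The first inclusion is the characterization of $I^{\alpha}_{0+}(L^1)$ via convergence of the Marchaud (Weyl-type) derivative in \cite{s-k-m}; the finiteness of the double integral in $\Vert f\Vert_{2,\beta'}$ is precisely what makes that criterion applicable. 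With $p=1$ and $q=\infty$ the admissibility condition $1/p+1/q\le 1+\alpha$ for integration by parts holds trivially, so once the domain memberships are recorded the interchange is legitimate and the argument is complete. Your closing remark on consistency with the Riemann--Stieltjes integral for H\"older functions is correct but is not part of the proposition; it is the content of Remark \ref{rmk:coincide}.
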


\begin{rmk}\label{rmk:coincide}
It is shown in \cite{z} that if $f \in C^{\gamma}$ and $g \in C^{\eta}$ with $ \gamma + \eta > 1$, then the generalized Lebesgue--Stieltjes integral $ \int_{0}^{t} f(s) \,\ud g(s)$ exists and coincides with the classical Riemann--Stieltjes integral, i.e., as a limit of Riemann--Stieltjes sums.  This is natural, since in this case one can also define the integrals as \emph{Young integrals} \cite{y}. 
\end{rmk}

We will also need the following estimate in order to prove our main theorems.

\begin{thm}\cite{n-r}\label{t:n-r}
Let $ f \in  W^{\beta}_2$ and  $ g \in W^{1- \beta}_1$. Then we have the estimate
$$
\left|\int_{0}^t f(s) \,\ud g(s) \right| \le \sup_{0\le s < t \le T} \big| D^{1 - \beta}_{t-} g_{t-}(s) \big| {\Vert f \Vert}_{2,\beta}.
$$
\end{thm}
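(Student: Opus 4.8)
The plan is to obtain the estimate directly from the Lebesgue-integral representation of Proposition \ref{pr:n-r}, with no recourse to approximation by Riemann--Stieltjes sums. Because $f \in W^\beta_2$ and $g \in W^{1-\beta}_1$, the hypotheses of that proposition are met, so
$$
\int_0^t f(s)\,\ud g(s) = (-1)^\beta \int_0^t (D^\beta_{0+} f)(s)\,(D^{1-\beta}_{t-} g_{t-})(s)\,\ud s .
$$
First I would pass to absolute values inside the integral and bound the factor $|(D^{1-\beta}_{t-} g_{t-})(s)|$ by its supremum over $0 \le s < t \le T$, pulling that constant out of the integral:
$$
\left| \int_0^t f(s)\,\ud g(s) \right| \le \sup_{0 \le s < t \le T} \big| D^{1-\beta}_{t-} g_{t-}(s) \big| \int_0^t \big| (D^\beta_{0+} f)(s) \big|\,\ud s .
$$

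It then remains to show that $\int_0^t |(D^\beta_{0+} f)(s)|\,\ud s \le \|f\|_{2,\beta}$. For this I would substitute the Weyl representation of $D^\beta_{0+}$, apply the triangle inequality to its two summands and integrate in $s$, obtaining
$$
\int_0^t \big| (D^\beta_{0+} f)(s) \big|\,\ud s \le \frac{1}{\Gamma(1-\beta)} \int_0^t \frac{|f(s)|}{s^\beta}\,\ud s + \frac{\beta}{\Gamma(1-\beta)} \int_0^t \!\! \int_0^s \frac{|f(s) - f(u)|}{(s-u)^{\beta+1}}\,\ud u\,\ud s .
$$
Since both integrands are nonnegative and $t \le T$, I may enlarge the upper limit from $t$ to $T$, after which the two integrals are exactly the two terms defining $\|f\|_{2,\beta}$. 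The prefactors are harmless: for $\beta \in (0,1)$ one has $1-\beta \in (0,1)$ and $\Gamma \ge 1$ on $(0,1]$, so both $1/\Gamma(1-\beta)$ and $\beta/\Gamma(1-\beta)$ are at most $1$, and the right-hand side is therefore dominated by $\|f\|_{2,\beta}$.

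The one point demanding attention is the validity of the Weyl representation, which presupposes $f \in I^\beta_{0+}(L^1)$; this membership is ensured by the finiteness of $\|f\|_{2,\beta}$, so that $D^\beta_{0+} f$ is a well-defined $L^1([0,t])$ function and the interchange of absolute value and integration above is legitimate. No essential difficulty arises here: once the representation formula is in hand, the estimate follows by the triangle inequality together with the explicit form of the fractional derivative, and the bound on $g$ enters only through the supremum left untouched on the right-hand side.
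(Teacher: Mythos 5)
The paper states this result without proof, citing \cite{n-r}, and your argument is correct and is essentially the standard one from that reference: take absolute values in the Lebesgue-integral representation of Proposition \ref{pr:n-r}, pull out $\sup_{0\le s<t\le T}|D^{1-\beta}_{t-}g_{t-}(s)|$, and dominate $\int_0^t|(D^{\beta}_{0+}f)(s)|\,\ud s$ by $\Vert f\Vert_{2,\beta}$ via the Weyl representation. Your observation that the prefactors $1/\Gamma(1-\beta)$ and $\beta/\Gamma(1-\beta)$ are at most $1$ (since $\Gamma\ge 1$ on $(0,1]$) correctly justifies the constant-free form of the bound as stated here, and the appeal to $W^{\beta}_2\subset I^{\beta}_{0+}(L^1)$ to legitimize the Weyl representation is exactly the right point to flag.
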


\subsection*{Föllmer integral}

We recall the definition of a forward-type Riemann--Stieltjes integral due to F\"{o}llmer \cite{Follmer} (see \cite{Sondermann} for English translation) and discuss its connection to the generalized Lebesgue--Stieltjes integral of Proposition \ref{pr:n-r}.
 
\begin{defn}\label{defn:follmer-integral}
Let $(\pi_n)_{n=1}^{\infty}$ be a sequence of partitions
$\pi_n=\{0=t_0^n<\ldots<t_{k(n)}^n=T\}$ such that $|\pi_n|=\max_{j=1,\ldots,k(n)}|t_j^n-t_{j-1}^n|\rightarrow 0$ as $n\to\infty$. Let $X$ be a continuous process. The \emph{Föllmer integral along the sequence $(\pi_n)_{n=1}^{\infty}$} of $Y$ with respect to $X$ is defined as 
$$
\int_0^t Y_u \,\ud X_u = \lim_{n\rightarrow\infty} \sum_{t_j^n\in\pi_n \cap(0,t]}Y_{t_{j-1}^n}(X_{t_j^n}-X_{t_{j-1}^n}),
$$
if the limit exists almost surely.  
\end{defn}

\begin{rmk}
\begin{enumerate}
\item
The Föllmer integral is  a \emph{forward-type} Riemann--Stieltjes integral.  Thus, if the Riemann--Stieltjes integral exists, so does the Föllmer integral, but not necessarily vice versa.  Also, it is clear that the Föllmer integral is a pathwise generalization of the It\^o integral, if one takes the sequence of partitions $(\pi_n)_{n=1}^\infty$ to be refining.    
\item
The Föllmer integral is a natural notion of integration in mathematical finance.  Indeed, the budget constraint of a self-financing trading strategy translates in the limit as a Föllmer integral.  See \cite{b-s-v-0,b-s-v} for further discussion.
\end{enumerate}
\end{rmk}

In general, it is very difficult to prove the existence of Föllmer integral.  In the case of so-called quadratic variation processes the existence is guaranteed by the It\^o--Föllmer formula of Lemma \ref{ito-follmer} below, which shows that the Föllmer integral behaves like the It\^o integral in the case of integrators with quadratic variation.

\begin{defn}
Let $(\pi_n)_{n=1}^{\infty}$ be a sequence of partitions $\pi_n=\{0=t_0^n<\ldots<t_{k(n)}^n=T\}$ such that $|\pi_n|=\max_{j=1,\ldots,k(n)}|t_j^n-t_{j-1}^n|\rightarrow 0$ as $n\to\infty$. Let $X$ be a continuous process. Then $X$ is a \emph{quadratic variation process along the sequence $(\pi_n)_{n=1}^\infty$} if the limit
$$
{\la X \ra}_t = \lim_{n\rightarrow\infty} \sum_{t_j^n\in\pi_n \cap(0,t]}\left(X_{t_j^n}-X_{t_{j-1}^n}\right)^2
$$
exists almost surely.
\end{defn}

\begin{rmk}
\begin{enumerate}
\item
For a standard Brownian motion $W$ we have $\ud\langle W \rangle_t = \ud t$ if the sequence $(\pi_n)$ is refining. This follows from the Borel-Cantelli lemma. 
\item
For continuous martingales $M$ their bracket $\la M\ra$ is also their quadratic variation (in the pathwise, Föllmer, sense) for suitably chosen sequences $(\pi_n)$. 
\item
If $Z$ is a continuous process with zero quadratic variation along $(\pi_n)$ and $X$ is a continuous quadratic variation process along $(\pi_n)$ then $\ud\langle X+Z\rangle_t = \ud\langle X\rangle_t$. This follows from the Cauchy-Schwartz inequality.
\item
If $X$ is a quadratic variation process along $(\pi_n)$ and $f\in C^1$ then $Y=f\circ X$ is also a quadratic variation process along $(\pi_n)$. Indeed,
$$
\ud\la Y \ra_t = f'(X_t)\,\ud\la X \ra_t.
$$
\end{enumerate}
\end{rmk}

\begin{lma}\label{ito-follmer}\cite{Follmer}
Let $X$ be a continuous quadratic variation process and let $f\in C^{1,2}([0,T]\times\R)$. Let $0\le s< t\le T$. Then
\begin{eqnarray*}
f(t,X_t) &=& f(s,X_s) + \int_s^t \frac{\partial f}{\partial t}(u,X_u)\, \ud u + \int_s^t \frac{\partial f}{\partial x}(u,X_u)\, \ud X_u \\
& &+ \frac{1}{2}\int_s^t \frac{\partial^2 f}{\partial x^2}(u,X_u)\, \ud\langle X\rangle_u.  
\end{eqnarray*}
In particular, the Föllmer integral exists and has a continuous modification.
\end{lma}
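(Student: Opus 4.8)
The plan is to run Föllmer's original telescoping-and-Taylor argument along the fixed partition sequence $(\pi_n)_{n=1}^\infty$ witnessing the quadratic variation of $X$, working on the event of full probability where the quadratic variation exists. Writing $\Delta_j = X_{t_j^n}-X_{t_{j-1}^n}$ and $\delta_j = t_j^n-t_{j-1}^n$, and assuming $s,t\in\pi_n$ (the general case following by continuity of the paths and of the integrands at the endpoints), I would begin from the telescoping identity
$$
f(t,X_t)-f(s,X_s) = \sum_{t_j^n\in\pi_n\cap(s,t]} \Big[ f(t_j^n,X_{t_j^n}) - f(t_{j-1}^n,X_{t_{j-1}^n}) \Big].
$$
Each summand I would split into a pure time increment and a pure space increment,
$$
f(t_j^n,X_{t_j^n}) - f(t_{j-1}^n,X_{t_{j-1}^n}) = \Big[ f(t_j^n,X_{t_j^n}) - f(t_{j-1}^n,X_{t_j^n}) \Big] + \Big[ f(t_{j-1}^n,X_{t_j^n}) - f(t_{j-1}^n,X_{t_{j-1}^n}) \Big],
$$
expanding the first bracket to first order in $t$ (using $f\in C^1$ in the time variable) and the second bracket to second order in $x$ (using $f\in C^2$ in the space variable), each with a mean-value remainder at an intermediate point.

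Summing over $j$ produces four pieces. The time piece $\sum_j \frac{\partial f}{\partial t}(\xi_j^n, X_{t_j^n})\,\delta_j$ is a Riemann sum converging to $\int_s^t \frac{\partial f}{\partial t}(u,X_u)\,\ud u$, since $u\mapsto \frac{\partial f}{\partial t}(u,X_u)$ is continuous. The first-order space piece $\sum_j \frac{\partial f}{\partial x}(t_{j-1}^n, X_{t_{j-1}^n})\,\Delta_j$ is exactly a Föllmer Riemann--Stieltjes sum for $\int_s^t \frac{\partial f}{\partial x}(u,X_u)\,\ud X_u$. The crux is the second-order piece $\frac12\sum_j \frac{\partial^2 f}{\partial x^2}(t_{j-1}^n, \eta_j^n)\,\Delta_j^2$, where $\eta_j^n$ lies between $X_{t_{j-1}^n}$ and $X_{t_j^n}$: here I would first replace $\eta_j^n$ by $X_{t_{j-1}^n}$, the error being controlled by the uniform continuity of $\frac{\partial^2 f}{\partial x^2}$ on the compact set $[0,T]\times \overline{X([0,T])}$, together with $\max_j |\Delta_j|\to 0$ (from path continuity and $|\pi_n|\to 0$) and the boundedness of $\sum_j \Delta_j^2$.

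The main obstacle is then to show $\sum_j \frac{\partial^2 f}{\partial x^2}(t_{j-1}^n, X_{t_{j-1}^n})\,\Delta_j^2 \to \int_s^t \frac{\partial^2 f}{\partial x^2}(u,X_u)\,\ud\la X\ra_u$, i.e. that the discrete quadratic-variation measures $\mu_n = \sum_j \Delta_j^2\,\delta_{t_{j-1}^n}$ integrate the continuous integrand $u\mapsto \frac{\partial^2 f}{\partial x^2}(u,X_u)$ against $\ud\la X\ra$ in the limit. I would resolve this by upgrading the pointwise convergence $\mu_n((s,u])\to \la X\ra_u$ from the definition to integration against a continuous test function: the limit $u\mapsto\la X\ra_u$ is nondecreasing and, after passing to its continuous version (implicit in the quadratic-variation assumption here, and automatic for continuous $X$), continuous, so by Pólya's theorem the pointwise convergence of these distribution functions to a continuous limit is uniform on $[s,t]$. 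Hence $\mu_n\to\ud\la X\ra$ vaguely, and the integrals of the bounded continuous integrand converge.

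Finally, the existence of the Föllmer integral falls out as a byproduct rather than being assumed. The left-hand side $f(t,X_t)-f(s,X_s)$ is a genuine limit, and three of the four partial sums converge by the steps above; therefore the remaining first-order space sum — the defining Föllmer Riemann--Stieltjes sum for $\int_s^t \frac{\partial f}{\partial x}(u,X_u)\,\ud X_u$ — must converge as well, and its limit is by definition that integral. Rearranging the formula expresses this integral as $f(t,X_t)-f(s,X_s)$ minus three terms that are continuous in $t$, which yields the asserted continuous modification.
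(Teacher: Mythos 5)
The paper gives no proof of this lemma at all---it is quoted directly from Föllmer---and your argument is precisely Föllmer's original one: telescope, Taylor-expand separately in time and space, upgrade the pointwise convergence $\sum_j\Delta_j^2\to\langle X\rangle_t$ to weak convergence of the discrete measures $\mu_n$ so that the second-order sum converges to $\frac12\int_s^t\partial^2_{xx}f(u,X_u)\,\ud\langle X\rangle_u$, and then read off the existence of the Föllmer integral as the residual limit. The only step deserving care is the continuity of $t\mapsto\langle X\rangle_t$ that your Pólya/weak-convergence argument needs: the paper's definition of a quadratic variation process asserts only pointwise existence of the limit, whereas Föllmer builds the continuity (equivalently, the vague convergence of the $\mu_n$ with no atoms for continuous $X$) into his definition, so you are in effect using Föllmer's form of the hypothesis rather than deriving it from the paper's weaker one---but that is the standard reading of the lemma, and with that understanding your proof is correct and complete.
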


\section{Main Results}
\label{sec:main}

\subsection*{Notations, Definitions and Auxiliary Results}

\begin{defn}
Let $X$ be a centered Gaussian process. We denote by $R(t,s)$, $W(t,s)$, and $V(t)$ its covariance, incremental variance and variance, i.e.
\begin{eqnarray*}
R(t,s) &=& \E[X_tX_s], \\
W(t,s) &=& \E[(X_t - X_s)^2], \\
V(t)   &=& \E[X_t^2].
\end{eqnarray*}
We denote by $w^*(t)$ the ''worst case'' incremental variance
$$
w^*(t) = \sup_{0\le s\le T-t} W(t+s,s).
$$
\end{defn}

We begin with the following technical Lemma.
\begin{lma}
\label{lma:technical}
Let $R$ be a covariance of a centered process with $R(s,t)>0$. Let $0<s\leq t\leq T$. 
\begin{enumerate}
\item
If $R(s,s)\leq R(s,t)$, then
$$
1 - \frac{R(s,s)}{R(t,s)}\leq  \frac{\sqrt{W(t,s)}}{\sqrt{V(s)}},
$$
\item
if $R(s,s)> R(s,t)$, then
$$
\frac{R(s,s)}{R(t,s)}-1\leq  \frac{\sqrt{W(t,s)}}{\sqrt{V(s)}}\frac{R(s,s)}{R(t,s)}.
$$
\end{enumerate}
\end{lma}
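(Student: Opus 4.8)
The plan is to derive both inequalities from a single application of the Cauchy--Schwarz inequality. The key observation is the algebraic identity
\begin{equation*}
R(t,s) - R(s,s) = \E[X_t X_s] - \E[X_s^2] = \E\big[(X_t - X_s)X_s\big],
\end{equation*}
which rewrites the quantity controlling both cases as the covariance between the increment $X_t - X_s$ and $X_s$. Applying Cauchy--Schwarz to the right-hand side and recalling that $\E[(X_t - X_s)^2] = W(t,s)$, $\E[X_s^2] = V(s)$ and $R(s,s) = V(s)$, I would obtain the master estimate
\begin{equation}
\big| R(t,s) - R(s,s) \big| \le \sqrt{W(t,s)}\,\sqrt{V(s)} = \sqrt{W(t,s)}\,\sqrt{R(s,s)}. \tag{$\star$}
\end{equation}
Everything then reduces to dividing by $R(t,s) > 0$ (which is legitimate by the standing assumption $R(s,t) > 0$ together with the symmetry $R(s,t) = R(t,s)$) and tracking the sign of $R(t,s) - R(s,s)$, which is exactly what separates the two cases.

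In case (i), where $R(s,s) \le R(t,s)$, the difference $R(t,s) - R(s,s)$ is nonnegative, so $(\star)$ gives $R(t,s) - R(s,s) \le \sqrt{W(t,s)}\,\sqrt{R(s,s)}$; dividing by $R(t,s)$ and then using $R(s,s) \le R(t,s)$ to replace one factor $R(t,s)$ in the denominator by the smaller $R(s,s)$ yields
\begin{equation*}
1 - \frac{R(s,s)}{R(t,s)} \le \frac{\sqrt{W(t,s)}\,\sqrt{R(s,s)}}{R(t,s)} \le \frac{\sqrt{W(t,s)}\,\sqrt{R(s,s)}}{R(s,s)} = \frac{\sqrt{W(t,s)}}{\sqrt{V(s)}},
\end{equation*}
which is the claim. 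In case (ii), where $R(s,s) > R(t,s)$, the difference $R(s,s) - R(t,s)$ is positive, so $(\star)$ gives $R(s,s) - R(t,s) \le \sqrt{W(t,s)}\,\sqrt{R(s,s)}$, and dividing by $R(t,s)$ gives directly
\begin{equation*}
\frac{R(s,s)}{R(t,s)} - 1 \le \frac{\sqrt{W(t,s)}\,\sqrt{R(s,s)}}{R(t,s)} = \frac{\sqrt{W(t,s)}}{\sqrt{V(s)}}\cdot\frac{R(s,s)}{R(t,s)},
\end{equation*}
which is exactly the asserted inequality.

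There is no genuine analytic obstacle here: the entire content is the single Cauchy--Schwarz step, and the remainder is elementary manipulation made legitimate by the positivity of $R(t,s)$. The only point deserving attention is the asymmetry between the two regimes. In case (i) the monotonicity $R(s,s) \le R(t,s)$ is genuinely exploited to sharpen the denominator and remove the ratio, whereas in case (ii) this sharpening is unavailable and the factor $R(s,s)/R(t,s)$ necessarily survives on the right-hand side; recognizing in advance which bound to aim for in each case is what guides the bookkeeping.
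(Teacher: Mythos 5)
Your proof is correct and follows essentially the same route as the paper's: both hinge on the single Cauchy--Schwarz estimate $|R(t,s)-R(s,s)|\le\sqrt{W(t,s)}\sqrt{V(s)}$ followed by elementary manipulation of the ratio $R(s,s)/R(t,s)$ using the positivity of $R(t,s)$ and the sign hypothesis of each case. The only difference is cosmetic: you obtain the master estimate directly from the identity $R(t,s)-R(s,s)=\E[(X_t-X_s)X_s]$, whereas the paper derives the same bound by completing the square starting from $R(t,s)^2\le R(t,t)R(s,s)$.
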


\begin{proof}
For the claim (i), note that we have always $R(t,s)^2 \leq R(t,t)R(s,s)$. Hence
\begin{equation}
\label{eq_selvio}
\frac{R(t,s)^2}{R(s,s)} + R(s,s)-2R(t,s) \leq R(t,t) + R(s,s) -2R(t,s).
\end{equation}
Clearly we have 
$$
\frac{R(t,s)^2}{R(s,s)} + R(s,s)-2R(t,s) = \left(\frac{R(t,s)}{\sqrt{R(s,s)}}-\sqrt{R(s,s)}\right)^2
$$
and
$$
R(t,t) + R(s,s) -2R(t,s) = W(t,s).
$$
Hence by taking square root on both sides of (\ref{eq_selvio}) we obtain
\begin{equation*}
\frac{R(t,s)}{\sqrt{R(s,s)}}-\sqrt{R(s,s)}\leq \sqrt{W(t,s)}.
\end{equation*}
It remains to note that
$$
1-\frac{R(s,s)}{R(t,s)}\leq \frac{R(t,s)}{R(s,s)}-1.
$$

For the claim (ii), arguing as in the the proof of claim (i) above, we obtain that
\begin{equation*}
1-\frac{R(t,s)}{R(s,s)}\leq \frac{\sqrt{W(t,s)}}{\sqrt{V(s)}}.
\end{equation*}
Multiplying by $\frac{R(s,s)}{R(t,s)}$ gives the result.
\end{proof}

The key lemma to our analysis is the following estimate for the probability that the process $X$ crosses a fixed level:  $\P(X_s < a < X_t)$. Depending on values of $t$, $s$ and $a$ one can obtain different estimates and detailed bounds are given in Lemma \ref{lma:crossing} in the appendix. For our purposes we need the following universal estimate which holds for every value $s$, $t$ and $a$.

\begin{lma}
\label{lma:crossing2}
Let $X$ be a centered Gaussian process with strictly positive and bounded covariance function $R$, $0 < s < t \le T$ 
and $ a \in \R$. Then there exists a universal constant $C$ such that
\begin{eqnarray*}
\lefteqn{\P \big( X_s < a < X_t\big)} \\
&\le& C \frac{\sqrt{W(t,s)}}{\sqrt{V(s)}} \left[1+\frac{R(s,s)}{R(t,s)}+ \frac{|a|e^{-\frac{a^2}{2V^*}}}{\sqrt{V(s)}}\max\left(1,\frac{R(s,s)}{R(t,s)}\right)\right],
\end{eqnarray*}
where 
\begin{equation*}
V^* = \sup_{s\leq T}V(s).
\end{equation*}
\end{lma}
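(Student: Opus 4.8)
The plan is to regard $(X_s,X_t)$ as a centered bivariate Gaussian vector and to evaluate the crossing probability by conditioning on $X_s$. Writing $\beta = R(t,s)/V(s)$ and using the regression decomposition $X_t = \beta X_s + Y$, with $Y$ independent of $X_s$ and $Y$ normal with mean $0$ and variance $\sigma^2 = V(t) - R(t,s)^2/V(s)$, the two regimes $\beta\ge 1$ and $\beta<1$ correspond exactly to the two cases $R(s,s)\le R(s,t)$ and $R(s,s)>R(s,t)$ of Lemma \ref{lma:technical}; note also that $R(s,s)/R(t,s) = 1/\beta$. I would first record the variance identity $W(t,s) = (\beta-1)^2 V(s) + \sigma^2$, obtained from $X_t - X_s = (\beta-1)X_s + Y$. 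It gives at once $\sigma \le \sqrt{W(t,s)}$ and $|\beta-1|\le \sqrt{W(t,s)}/\sqrt{V(s)}$, which is the quantitative content that, after passing to the reciprocal, is encapsulated in Lemma \ref{lma:technical}.

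Conditioning on $X_s$ yields
$$
\P(X_s < a < X_t) = \int_{-\infty}^{a} \frac{1}{\sqrt{2\pi V(s)}}\, e^{-x^2/(2V(s))}\left(1 - \Phi\!\left(\frac{a - \beta x}{\sigma}\right)\right)\ud x,
$$
where $\Phi$ is the standard normal distribution function. The tail factor makes its transition from $\approx 0$ to $\approx 1$ across a window in $x$ of width of order $\sigma/\beta$ centred at $x = a/\beta$. I would split the integral into a \emph{bulk} part, on the region where the tail factor is essentially one, and a \emph{transition} part over that window. On the bulk region the integral is comparable to the length $|a - a/\beta|$ times the Gaussian density near $a$, i.e. of order $\frac{|a|}{\sqrt{V(s)}}\,e^{-a^2/(2V(s))}\,|1 - 1/\beta|$; bounding $|1-1/\beta|$ by $\frac{\sqrt{W(t,s)}}{\sqrt{V(s)}}\max(1,1/\beta)$ through Lemma \ref{lma:technical} and using $V(s)\le V^*$ produces exactly the third bracket term. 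On the transition window the integral is of order (window width)$\times$(density), i.e. $\frac{\sigma}{\beta}\cdot\frac{1}{\sqrt{V(s)}} \le \frac{\sqrt{W(t,s)}}{\sqrt{V(s)}}\cdot\frac{1}{\beta}$, giving the second term $R(s,s)/R(t,s)=1/\beta$, while the residual numerical constant is absorbed into the leading $1$. In every piece the common factor $\sqrt{W(t,s)}/\sqrt{V(s)}$ emerges either from $\sigma\le\sqrt{W(t,s)}$ or from Lemma \ref{lma:technical}.

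The main obstacle is to organize the estimate \emph{uniformly} over all $a\in\R$ and over both regimes of $\beta$: the centre $a/\beta$ of the transition window may lie inside or outside the integration range $(-\infty,a)$ depending on the sign of $a$ and on whether $\beta\lessgtr 1$, and in each configuration one must verify that the Gaussian weight at the relevant point is dominated by $e^{-a^2/(2V^*)}$ rather than merely by $1$. Capturing the sharp constants in these configurations is precisely the task of the case-by-case Lemma \ref{lma:crossing} in the appendix; the universal estimate stated here then follows by summing (or taking the maximum of) those cases and absorbing all numerical factors into the single constant $C$.
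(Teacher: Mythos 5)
Your argument is correct and follows essentially the same route as the paper: the published proof of this lemma is precisely to take the case-by-case bounds of Lemma \ref{lma:crossing} (whose proof is the regression decomposition $X_t=\beta X_s+Y$ and the conditioning integral you describe) and control them via $\sigma^2\le W(t,s)$ and Lemma \ref{lma:technical}, which is exactly where your sketch ends up. Your variance identity $W(t,s)=(\beta-1)^2V(s)+\sigma^2$ is a slightly cleaner way to obtain both of those quantitative inputs at once than the paper's Cauchy--Schwarz argument for Lemma \ref{lma:technical}, but the overall structure is the same.
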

\begin{proof}
The claim follows directly by applying Lemma \ref{lma:technical} and inequality
$\sigma^2 \le W(t,s)$ on terms in Lemma \ref{lma:crossing}.
\end{proof}

Recall that a process  $X=(X_t)_{t\in[0,T]}$ is \emph{Hölder continuous of order $\alpha$} if there exists an almost surely finite random variable $C_T$ such that
\begin{equation*}
|X_t -X_s| \leq C_T|t-s|^{\alpha}
\end{equation*}
almost surely for all $s,t\in[0,T]$.

\begin{defn}\label{defn:Xalpha}
Let $0<\alpha<1$. A centered continuous Gaussian process $X=(X_t)_{t\in[0,T]}$ with covariance $R$ belongs to \emph{class $\mathcal{X}^\alpha$} if 
\begin{enumerate}
\item
$R(s,t)> 0$ for every $s,t>0$,
\item
the ''worst case'' incremental variance satisfies, at $t=0$,
\begin{equation*}
w^*(t) = Ct^{2\alpha} + o(t^{2\alpha}),
\end{equation*}
where $C> 0$,
\item
there exist $c,\delta>0$ such that
$$
V(s) \geq cs^{2},
$$
when $s\leq \delta$,
\item
there exists a $\delta>0$ such that
$$
\sup_{0< t<2\delta}\sup_{\frac{t}{2}\leq s\leq t}\frac{R(s,s)}{R(t,s)}< \infty.
$$
\end{enumerate}
\end{defn}

Definition \ref{defn:Xalpha} of the class $\mathcal{X}^\alpha$ is rather technical. However, next remarks and examples should convince the reader that assumptions are relatively natural and that the class $\mathcal{X}^\alpha$ is quite large.

\begin{rmk} 
\begin{enumerate}
\item
The first condition on strictly positive autocorrelation $R(t,s)> 0$ is rather restrictive.  
However, it seems that most Gaussian models do indeed satisfy it.  
Also, if one uses the generalized Lebesgue--Stieltjes approach, one has to assume $R(s,t)>0$. Indeed, otherwise our integrands 
would not belong to the required fractional Besov spaces.
\item
The second condition on the ``worst case'' incremental variance is the most important assumption: it implies that $X$ has a version which is  H\"{o}lder continuous of order $r$ for any $r<\alpha$ on $[0,T]$. Indeed, now
$$
\E\left[(X_t - X_s)^p\right] \leq C_p^p |t-s|^{p\alpha}
$$
for every $p\geq 1$. Hence the H\"{o}lder continuity follows directly from the Kolmogorov continuity theorem. 
\item
The third condition implies that the process is not too smooth.  Indeed, if the variance $V(s)$  behaves like $s^\gamma$  for some $\gamma\geq 2$ near zero, we obtain that the process is differentiable in the mean square sense. As a consequence we could apply standard integration techniques for such cases. 
\item
Finally, the fourth assumption is quite mild as for it we simply need that when $s$ and $t$  are both close to each 
other and at the same time near to zero, 
the variance $V(s)$ is not ''too far'' from the covariance $R(s,t)$. The fourth condition is also connected to the notion
of \emph{local non-determinism} introduced by Berman \cite{b} in connection to the existence of local time as an occupation density.
\end{enumerate}
\end{rmk}

\begin{exm}
Let $0\le s\le t\le T$.
\begin{enumerate}
\item
For processes with stationary increments 
\begin{eqnarray*}
R(t,s) &=& \frac{1}{2}\big[V(t)+V(s)-V(t-s)\big], \\
W(t,s) &=& V(t-s), \\
w^*(t) &=& V(t).
\end{eqnarray*}
Consequently, a process with stationary increments belongs to $\mathcal{X}^\alpha$ if and only 
if $V(t)>0$ for all $t>0$ and $V(t) = Ct^{2\alpha} + o(t^{2\alpha})$ at $t=0$.

In particular, the fractional Brownian motion with index $\alpha\in(0,1)$ belongs to the space $\mathcal{X}^\alpha$. 
\item
For stationary processes 
\begin{eqnarray*}
R(t,s) &=& r(t-s), \\
W(t,s) &=& 2\big[r(0)-r(t-s)\big], \\
V(t)   &=& r(0), \\
w^*(t) &=& 2\big[r(0)-r(t)\big].
\end{eqnarray*}
Consequently, a stationary process belongs to the class $\mathcal{X}^\alpha$ if and only if $r(t)>0$ for all $t$ and at $t=0$ we have
$$
r(0)-r(t) = Ct^{2\alpha} + o(t^{2\alpha}).
$$

In particular, fractional Ornstein--Uhlenbeck processes with index $\alpha\in(0,1)$ belongs to the space $\mathcal{X}^\alpha$. 
\end{enumerate}
\end{exm}


\subsection*{Existence of Generalized Lebesgue--Stieltjes Integral}
We begin with one of our main theorems which guarantees the existence of integrals.
\begin{thm}
\label{main_thm:stat}
Let $X\in\mathcal{X}^{\alpha}$ for some $\alpha>\frac{1}{3}$. If $\beta < \alpha \wedge (3\alpha -1)$, then for every convex function $f$ we have $f'_-(X_\cdot)\in W_2^{\beta}$ almost surely. 
\end{thm}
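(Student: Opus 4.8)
The plan is to show that both terms defining $\|f'_-(X_\cdot)\|_{2,\beta}$ are finite almost surely, and to obtain this by bounding their expectations. A preliminary reduction is needed because a fast-growing convex $f$ may render $f'_-(X_\cdot)$ non-integrable: the second condition of Definition \ref{defn:Xalpha} together with the Kolmogorov continuity theorem gives that $X$ has an almost surely finite modulus $M=\sup_{t\in[0,T]}|X_t|$, so on the event $\{M\le N\}$ one may replace $f$ by the convex function $f_N$ that agrees with $f$ on $[-N,N]$ and is affine outside it. This leaves $f'_-(X_t)$ unchanged for every $t$ on $\{M\le N\}$, while $\mu_N=f_N''$ is a finite, compactly supported measure. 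Establishing $f'_{N,-}(X_\cdot)\in W_2^\beta$ almost surely for each $N$ and letting $N\to\infty$ then yields the claim, so from now on I may assume $f'_-$ bounded and $\mu=f''$ a finite measure with compact support.

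For the first term, $|f'_-(X_s)|$ is dominated by the random constant $\sup_{|x|\le M}|f'_-(x)|$ and $\int_0^T s^{-\beta}\,\ud s<\infty$ since $\beta<1$, so that term is finite almost surely. The difficulty is entirely in the double integral. Here I would pass to expectations, use Tonelli to exchange expectation and the $(u,s)$-integration, and exploit convexity through the identity $f'_-(y)-f'_-(x)=\mu([x,y))$ for $x<y$, which gives the level-crossing representation
\begin{equation*}
\E\big|f'_-(X_s)-f'_-(X_u)\big| = \int_{\R}\Big[\P\big(X_u<a<X_s\big)+\P\big(X_s<a<X_u\big)\Big]\,\mu(\ud a).
\end{equation*}
This is exactly where the crossing estimates enter: I would bound each crossing probability by Lemma \ref{lma:crossing2} (using Lemma \ref{lma:technical} to simplify the ratio factors $R(s,s)/R(t,s)$), and, in the delicate regime described below, by the sharper case-by-case bounds of Lemma \ref{lma:crossing}. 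Integrating the resulting expression against the finite measure $\mu$ poses no problem in $a$, the Gaussian factor $|a|e^{-a^2/2V^*}$ keeping each $a$-integral finite.

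It then remains to control the deterministic double integral of $(s-u)^{-1-\beta}$ against the variance factors $\sqrt{W(s,u)}/\sqrt{V(u)}$. The two constraints on $\beta$ should come from two regions. Near the diagonal I would use the second condition of Definition \ref{defn:Xalpha} in the form $\sqrt{W(s,u)}\lesssim(s-u)^{\alpha}$ and the fourth condition to bound $R(u,u)/R(s,u)$; convergence of $\int_0^s(s-u)^{\alpha-1-\beta}\,\ud u$ then forces $\beta<\alpha$. The binding constraint $\beta<3\alpha-1$ I expect to arise in the region where both times approach the origin, where the variance is at its smallest admissible size $V(u)\ge c\,u^2$ from the third condition: there $1/\sqrt{V(u)}$ degenerates like $1/u$, and balancing this degeneracy against the $(s-u)^{\alpha}$ gain and the $(s-u)^{-1-\beta}$ singularity is what produces the exponent $3\alpha-1$ and hence the requirement $\alpha>\tfrac13$ needed for the admissible range of $\beta$ to be nonempty.

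The main obstacle is precisely this degenerate region near the time origin. The universal bound of Lemma \ref{lma:crossing2} is too lossy there, since its prefactor $1/\sqrt{V(u)}$ combined with the crude lower bound $V(u)\ge c\,u^2$ already makes the $u$-integral diverge. The heart of the argument is therefore to replace it, near the origin, by the finer process-scale estimates of Lemma \ref{lma:crossing}, in which the Gaussian decay is measured on the scale $\sqrt{V(u)}$ rather than $\sqrt{V^*}$. This restores the cancellation between the $1/\sqrt{V(u)}$ prefactor and the $a$-integration of $\mu$, and the careful accounting of the competing powers of $V(u)$, $V(s)$ and $W(s,u)$ there, under conditions (iii) and (iv), is the step I expect to be technically demanding.
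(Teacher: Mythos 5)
Your overall skeleton matches the paper's: reduce to $f''$ compactly supported, handle the first Besov term trivially, pass to expectations via Tonelli, rewrite the increment of $f'_-$ through level-crossing probabilities, and control those by the crossing lemmas, with the near-origin region identified as the dangerous one. However, there is a genuine gap in the step that actually produces the constraint $\beta<3\alpha-1$, and your accounting of where that exponent comes from is incorrect. You claim it arises from balancing the degeneracy $1/\sqrt{V(u)}\gtrsim 1/u$ against $(s-u)^{\alpha}$ and $(s-u)^{-1-\beta}$. But that balance gives $\int_0^{\delta} t^{-1}\,t^{\alpha-\beta}\,\ud t$, which converges precisely when $\beta<\alpha$ --- it never produces $3\alpha-1$. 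The term that is actually problematic in Lemma \ref{lma:crossing2} is the one with prefactor $|a|\sqrt{W(t,s)}/V(s)$, i.e.\ a $1/V(s)\sim s^{-2}$ degeneracy, and for that term the $(u,s)$-integration alone gives $\int_0^\delta t^{-2}t^{\alpha-\beta}\,\ud t$, which diverges for every admissible $\beta$. No choice between $\sqrt{V}$ and $V$ in the denominator yields the exponent $3\alpha-1$ from the time integration by itself.

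The missing idea is that the exponent $3\alpha-1$ comes from the integration in the \emph{level} variable $a$ against $f''(\ud a)$ near $a=X_0$, not from the time integration. The paper first observes that conditions (i) and (iii) force either $\inf_s V(s)>0$ (trivial case) or $X_0=0$; in the latter case it decomposes $[0,2\delta]$ dyadically into blocks $[t_{k+1},t_k]$, applies the crossing lemma on the restricted interval so that the Gaussian factor becomes $e^{-a^2/(2Ct_k^{2\alpha})}$ with the \emph{local} variance scale $\sup_{s\le t_k}V(s)\le Ct_k^{2\alpha}$ (from condition (ii), since $V(s)=W(s,0)\le w^*(s)$), and only then integrates in $a$: after mollifying $f''$ to a bounded density near $0$, one gets $\int_0^1 a\,e^{-a^2/(2Ct_k^{2\alpha})}f''_n(a)\,\ud a\le Ct_k^{2\alpha}$. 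Multiplying this by the $t_k^{\alpha-\beta-1}$ coming from the block's time integration of $(t-s)^{\alpha-\beta-1}/s^2$ yields $\sum_k t_k^{3\alpha-\beta-1}$, which is where $\beta<3\alpha-1$ (and hence $\alpha>1/3$) enters. Your proposal asserts early on that ``integrating against the finite measure $\mu$ poses no problem in $a$,'' which is exactly backwards: for a single level $a\neq 0$ the exponential makes everything converge with no constraint beyond $\beta<\alpha$, and it is only the uniformity over $a$ near $X_0$, obtained by trading the $a$-integral for a factor $t_k^{2\alpha}$, that is delicate and that generates the second constraint. Your closing paragraph gestures at a ``cancellation between the $1/\sqrt{V(u)}$ prefactor and the $a$-integration,'' but with the wrong power of $V$ and without the dichotomy on $X_0$, the dyadic decomposition, the localized variance in the exponential, or the mollification of $f''$, so the argument as proposed does not close.
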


\begin{proof}
Let us first note that it is sufficient to consider convex functions of the form $f(x)=(x-a)^+$, $a\in\R$. Indeed, assume for a moment that the result 
is valid for these particular convex functions and  let $f$ be a general convex function for which the measure $f''$ has compact support. Then
\begin{equation}
\label{rep_compact}
|f'_-(X_t) - f'_-(X_s)| \leq \int_{\mathrm{supp}(f'')} \mathbf{1}_{X_t < a < X_s} + \mathbf{1}_{X_s < a < X_t}\, f''(\ud a).
\end{equation}
Now, by applying Tonelli's theorem, and the result for the functions $(x-a)^+$, $a\in\R$, 
we obtain the result for the convex functions $f$ with $\mathrm{supp}(f'')$ compact. Finally, the general case follows by approximating 
convex function $f$ with a sequence $f_n$ of convex functions for which $f''_n$ has compact support (see \cite{a-m-v} or \cite{a-v} for details). 

Consider then functions $f(x)=(x-a)^+$, $a\in\R$. Now,
\begin{equation*}
|f'_-(X_t) - f'_-(X_s)| = \textbf{1}_{X_t < a < X_s}+ \textbf{1}_{X_s < a < X_t}.
\end{equation*}
For the first term in the fractional Besov norm we obtain
\begin{equation*}
\int_0^T\frac{\textbf{1}_{X_t > a}}{t^{\beta}}\,\ud t \leq \int_0^T\frac{1}{t^{\beta}}\,\ud t < \infty.
\end{equation*}
It follows that we only have to prove that
\begin{equation*}
I=\int_0^T\!\int_0^t \frac{\mathbf{1}_{X_s<a<X_t}+\mathbf{1}_{X_t<a<X_s}}{(t-s)^{\beta+1}}\,\ud s\ud t< \infty.
\end{equation*}
Now, the iterated integral above can be split as
\begin{eqnarray*}
I &=& \int_0^{2\delta}\!\int_{\frac{t}{2}}^t\frac{\mathbf{1}_{X_s<a<X_t}+\mathbf{1}_{X_t<a<X_s}}{(t-s)^{\beta+1}}\,\ud s\ud t \\
& &+ \int_{2\delta}^T\!\int_{t-\delta}^t\frac{\mathbf{1}_{X_s<a<X_t}+\mathbf{1}_{X_t<a<X_s}}{(t-s)^{\beta+1}}\,\ud s\ud t\\
& &+ \int_{2\delta}^T\!\int_0^{t-\delta}\frac{\mathbf{1}_{X_s<a<X_t}+\mathbf{1}_{X_t<a<X_s}}{(t-s)^{\beta+1}}\,\ud s\ud t \\
& &+ \int_0^{2\delta}\!\int_0^{\frac{t}{2}}\frac{\mathbf{1}_{X_s<a<X_t}+\mathbf{1}_{X_t<a<X_s}}{(t-s)^{\beta+1}}\,\ud s\ud t\\
&=& I_1 + I_2 + I_3 + I_4.
\end{eqnarray*}
In integrals $I_3$ and $I_4$ we can bound indicators with one and they are still finite. Hence, it is sufficient to consider integrals $I_1$ and $I_2$. First, we note that by taking expectation and applying Tonelli's theorem we have the result if
\begin{equation}
\label{assumption_stupid2}
\int_{2\delta}^T\int_{t-\delta}^t \frac{\P(X_t<a<X_s)+\P(X_t>a>X_s)}{(t-s)^{\beta+1}}\,\ud s\ud t < \infty,
\end{equation}
and
\begin{equation}
\label{assumption_stupid}
\int_0^{2\delta}\int_{\frac{t}{2}}^t \frac{\P(X_t<a<X_s)+\P(X_t>a>X_s)}{(t-s)^{\beta+1}}\,\ud s\ud t < \infty.
\end{equation}
We begin with the term (\ref{assumption_stupid2}). By symmetry we can only analyze the term $\P(X_t>a>X_s)$. We have
\begin{equation*}
W(t,s)\leq w^*(t-s).
\end{equation*}
Now, since $X$ is continuous on $[0,T]$, $V^*<\infty$.
Hence, by assumption (iii) of Definition \ref{defn:Xalpha} and Lemma \ref{lma:crossing2}, it is sufficient to consider integral of form
\begin{equation*}
\int_{2\delta}^T\!\int_{t-\delta}^t \frac{\sqrt{W(t,s)}}{(t-s)^{\beta+1}}\,\ud s\ud t.
\end{equation*}
Let now $\delta$ be small enough such that, by assumption (ii) of Definition \ref{defn:Xalpha}, we have
\begin{equation*}
w^*(t) \leq Ct^{2\alpha}
\end{equation*}
for some constant $C$. Since $\alpha>\beta$ we obtain that (\ref{assumption_stupid2}) holds almost surely. 
Consider next term in (\ref{assumption_stupid}). Note first that by applying Lemma \ref{lma:crossing2} together with (iii) and (iv) of Definition \ref{defn:Xalpha} we obtain
\begin{equation*}
\int_0^{2\delta}\!\int_{\frac{t}{2}}^t \frac{(t-s)^{\alpha}}{s(t-s)^{\beta+1}}\,\ud s \ud t 
\,\leq\, C\int_0^{2\delta} t^{-1}t^{\alpha-\beta}\,\ud t \,<\, \infty.
\end{equation*}
Hence it is sufficient to study a term of form
\begin{equation*}
e^{-\frac{a^2}{2V^*}}\frac{|a|\sqrt{W(t,s)}}{V(s)}.
\end{equation*}
The case $a=0$ is trivial. Let now $a\neq 0$ and introduce time points $t_0=2\delta$, 
$$
t_k = 2\delta- 2\delta\sum_{j=1}^k \left(\frac{1}{2}\right)^j, \quad k\geq 1.
$$
Now the integral can be split as
\begin{equation*}
\int_0^{2\delta}\int_{\frac{t}{2}}^t \ldots\ud s\ud t =
\sum_{k=0}^\infty \int_{t_{k+1}}^{t_k}\int_{\frac{t}{2}}^t\ldots \ud s\ud t.
\end{equation*}
Note next that assumptions (i) and (iii) of Definition \ref{defn:Xalpha} implies that either $\inf_{s\in[0,T]}V(s)>0$ or else $X_0$ is a constant. In the first case the 
proof is trivial. On the latter case, since $X$ is centered, we have $X_0=0$ and in this case 
$$
\sup_{t_{k+1}\leq s\leq t_k}V(s) \leq Ct_k^{2\alpha}.
$$
Hence by applying Lemma \ref{lma:crossing2} we obtain
\begin{eqnarray*}
\sum_{k=0}^\infty \int_{t_{k+1}}^{t_k}\int_{\frac{t}{2}}^t\frac{\P(X_t>a>X_s)}{(t-s)^{\beta+1}} \ud s\ud t 
&\leq& C |a|\sum_{k=0}^\infty \int_{t_{k+1}}^{t_k}\int_{\frac{t}{2}}^t\frac{e^{-\frac{a^2}{2Ct_{k}^{2\alpha}}}(t-s)^{\alpha}}{s^{2}(t-s)^{\beta+1}} \ud s\ud t \\
&\leq& C \sum_{k=0}^\infty  \int_{t_{k+1}}^{t_k} e^{-\frac{a^2}{2Ct_{k}^{2\alpha}}}t^{-2}t^{\alpha-\beta}\ud t\\
&\leq& C \sum_{k=0}^\infty e^{-\frac{a^2}{2Ct_{k}^{2\alpha}}}t_k^{\alpha-\beta-1}\\
&<& \infty.
\end{eqnarray*}
Hence the result is valid for convex function $f(x)=(x-a)^+$. 
Consider next more general convex function for which $f''$ has compact support. Using (\ref{rep_compact}) we have
\begin{eqnarray*}
& &\int_{\mathrm{supp}(f'')} \mathbf{1}_{X_t < a < X_s} + \mathbf{1}_{X_s < a < X_t}\, f''(\ud a)\\
&= & \int_{\mathrm{supp}(f''): a\in[0,1]} \mathbf{1}_{X_t < a < X_s} + \mathbf{1}_{X_s < a < X_t}\, f''(\ud a)\\
&+&\int_{\mathrm{supp}(f''): a\in[-1,0)} \mathbf{1}_{X_t < a < X_s} + \mathbf{1}_{X_s < a < X_t}\, f''(\ud a)\\
&+& \int_{\mathrm{supp}(f'') : |a|>1} \mathbf{1}_{X_t < a < X_s} + \mathbf{1}_{X_s < a < X_t}\, f''(\ud a)\\
&=:& J_1 + J_2 + J_3.
\end{eqnarray*}
For $J_3$ we have lower bound for $a$ and since $\int_{\mathrm{supp}(f'') : |a|>1}f''(\ud a) < C$ the result follows immediately from the result for functions 
$(x-a)^+$. Consider next the term $J_1$ and take a sequence of functions $f_n\in C^2$ such that 
\begin{equation*}
\int_\R g(x)f''_n(x)\ud x \rightarrow \int_\R g(x)f''(\ud x)
\end{equation*}
for every function $g\in C^1$ with compact support (see \cite{a-m-v}). Then noting that $3\alpha > \beta +1$ we get
\begin{eqnarray*}
& &\sum_{k=0}^\infty \int_0^1ae^{-\frac{a^2}{2Ct_{k}^{2\alpha}}}t_k^{\alpha-\beta-1}f''(\ud a) \\
& = & \sum_{k=0}^\infty \lim\sup_n\int_0^1ae^{-\frac{a^2}{2Ct_{k}^{2\alpha}}}t_k^{\alpha-\beta-1}f''_n(a)\ud a \\
&\leq & \sum_{k=0}^\infty\lim\sup_n\max_{0\leq a \leq 1}f''_n(a) \int_0^1ae^{-\frac{a^2}{2Ct_{k}^{2\alpha}}}t_k^{\alpha-\beta-1}\ud a \\
&\leq & C\lim\sup_n\max_{0\leq a \leq 1}f''_n(a)\sum_{k=0}^\infty \int_0^1 a e^{-\frac{a^2}{2Ct_{k}^{2\alpha}}}t_k^{\alpha-\beta-1}\ud a \\
&\leq & C\lim\sup_n\max_{0\leq a \leq 1}f''_n(a)\sum_{k=0}^\infty t_k^{3\alpha-\beta-1}\\
&\leq & C\lim\sup_n\max_{0\leq a \leq 1}f''_n(a)\\
& < & \infty
\end{eqnarray*}
since for every fixed $a$ we have $f''_n(a)\rightarrow f''(a)$ and we consider maximum on a compact interval (see \cite{a-m-v} for details). 
The term $J_2$ can be treated similarly and hence we are done.
\end{proof}
Now it is obvious to obtain the existence of the integral by Proposition \ref{pr:n-r}.
\begin{cor}
Let $f$ be a linear combination of convex functions, $Y\in W_1^{1-\beta}$ and $X\in\mathcal{X}^{\alpha}$ for some $\alpha>\frac{1}{3}$. If $\beta < \alpha \wedge (3\alpha -1)$, then the integral
\begin{equation*}
\int_0^T f'_-(X_u)\,\ud Y_u
\end{equation*}
exists almost surely in the sense of generalized Lebesgue-Stieltjes integral.
\end{cor}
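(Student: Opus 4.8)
The plan is to verify that, almost surely, the trajectories of the integrand and of the integrator belong to the two fractional Besov spaces appearing in Proposition~\ref{pr:n-r}, and then to apply that proposition pathwise. First I would fix a single exponent $\beta$ in the admissible range $0<\beta<\alpha\wedge(3\alpha-1)$. Since $\alpha>\frac13$ one has $3\alpha-1>0$, and because $\alpha<1$ a short case distinction ($\alpha\le\frac12$ versus $\alpha>\frac12$) shows $\alpha\wedge(3\alpha-1)<1$; hence such a $\beta$ exists and satisfies $0<\beta<1$ together with $0<1-\beta<1$, so that both Besov indices $\beta$ and $1-\beta$ are legitimate.

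Next I would reduce to the case of a single convex function. Writing $f=\sum_{i=1}^m c_i g_i$ with each $g_i$ convex, linearity of the left derivative gives $f'_-=\sum_{i=1}^m c_i (g_i)'_-$. Theorem~\ref{main_thm:stat} provides, for each $i$, an almost sure event on which $(g_i)'_-(X_\cdot)\in W_2^{\beta}$. Intersecting these finitely many events and using that $W_2^{\beta}$ is a vector space (finiteness of $\|\cdot\|_{2,\beta}$ is preserved under finite linear combinations), I conclude that $f'_-(X_\cdot)\in W_2^{\beta}$ on an almost sure event.

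Finally, on the intersection of this event with the (almost sure) event on which the trajectory of $Y$ lies in $W_1^{1-\beta}$, as assumed, both hypotheses of Proposition~\ref{pr:n-r} hold under the identification $f\leftrightarrow f'_-(X_\cdot)$ and $g\leftrightarrow Y$. The proposition then yields that
\[
\int_0^T f'_-(X_u)\,\ud Y_u
\]
exists as the stated Lebesgue integral; and since Proposition~\ref{pr:n-r} asserts independence of $\beta$, the value does not depend on the particular choice made above.

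I do not expect a genuine obstacle here: the substantive analytic work—controlling the level-crossing probabilities via Lemma~\ref{lma:crossing2} and thereby the double integral defining $\|f'_-(X_\cdot)\|_{2,\beta}$—has already been carried out in Theorem~\ref{main_thm:stat}. The only points requiring care are bookkeeping ones, namely checking that the chosen $\beta$ gives admissible Besov indices in $(0,1)$, and handling the finite linear combination on a common almost sure event so that Proposition~\ref{pr:n-r} can be invoked trajectory by trajectory.
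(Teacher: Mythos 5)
Your proposal is correct and follows the same route as the paper, which simply invokes Theorem \ref{main_thm:stat} to place $f'_-(X_\cdot)$ in $W_2^{\beta}$ almost surely and then applies Proposition \ref{pr:n-r} pathwise with the integrator $Y\in W_1^{1-\beta}$; the paper states this in one line, while you supply the (routine but worth recording) bookkeeping on the admissibility of $\beta$ and the reduction of a finite linear combination of convex functions to a common almost sure event via linearity of the left derivative and of the Besov norm.
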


\begin{cor}
Let $f$ be a linear combination of convex functions, $Y\in W_1^{1-\beta}$ and $X\in\mathcal{X}^{\alpha}$ for some $\alpha>\frac{1}{3}$. Put $S=g(X)$ for some strictly monotone function $g\in C^2$. If $\beta<\alpha \wedge (3\alpha -1)$, then the integral
\begin{equation*}
\int_0^T f'_-(S_u)g'(X_u)\,\ud Y_u
\end{equation*}
exists almost surely in the sense of generalized Lebesgue-Stieltjes integral.
\end{cor}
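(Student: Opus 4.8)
The plan is to recognize the integrand $f'_-(S_u)g'(X_u)$ as the left-derivative of the composition $h:=f\circ g$ evaluated along $X$, and thereby reduce the statement to the previous corollary applied to $h$. Suppose first that $g$ is strictly increasing. Then the one-sided chain rule gives $h'_-(x)=f'_-(g(x))\,g'(x)$ for every $x$, so that $h'_-(X_u)=f'_-(S_u)g'(X_u)$ is exactly the integrand. Hence, once I know that $h$ is again a linear combination of convex functions, the previous corollary (with the same $\beta$, with $Y\in W_1^{1-\beta}$ and $X\in\mathcal{X}^\alpha$) yields the existence of $\int_0^T h'_-(X_u)\,\ud Y_u$ in the generalized Lebesgue--Stieltjes sense of Proposition \ref{pr:n-r}, which is the claim.

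The heart of the argument is therefore the closure property that if $f$ is a linear combination of convex functions and $g\in C^2$ is strictly monotone, then $h=f\circ g$ is again a linear combination of convex functions. I would prove this by showing that $h'_-$ is locally of bounded variation, since a function whose left-derivative is locally BV is locally a difference of two convex functions. Now $f'_-$ is locally BV, being a linear combination of the nondecreasing functions $(f_i)'_-$; the composition $f'_-\circ g$ of a locally BV function with a \emph{monotone} continuous function is again locally BV; and $g'\in C^1$ is locally BV. Since a product of locally BV functions is locally BV, $h'_-=(f'_-\circ g)\,g'$ is locally BV, as required. I expect this to be the main obstacle: it is the one genuinely new ingredient, and it must be phrased through the second-derivative measure $h''(\ud x)$ rather than pointwise, because $f'_-$ is in general unbounded and is determined only up to its countably many jump points. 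Here strict monotonicity of $g$ is essential, since it is what keeps $f'_-\circ g$ of bounded variation.

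Two technical points remain. First, for strictly decreasing $g$ the one-sided chain rule produces $h'_-(x)=f'_+(g(x))\,g'(x)$, with the right-derivative of $f$; but $f'_+$ and $f'_-$ differ only on the countable set of kinks of $f$, and for each fixed $u$ with $V(u)>0$ the variable $S_u=g(X_u)$ avoids that set almost surely, so by Tonelli the two integrands agree for almost every $u$, almost surely. As the integral of Proposition \ref{pr:n-r} is built from Lebesgue integrals it is insensitive to such modifications, and the decreasing case reduces to the increasing one. Second, $h$ is only \emph{locally} a linear combination of convex functions, whereas the previous corollary assumes a globally defined one; I would close this gap by localization. Since $X$ is continuous on $[0,T]$, the variable $M=\sup_{u\in[0,T]}|X_u|$ is almost surely finite, and on $\{M\le m\}$ I may replace $h$ by the globally defined linear combination of convex functions $\tilde h_m$ that coincides with $h$ on $[-m-1,m+1]$ and is affine outside, so that $(\tilde h_m)'_-(X_u)=h'_-(X_u)$ for all $u$; the previous corollary applied to $\tilde h_m$ gives existence on $\{M\le m\}$, and letting $m\to\infty$ over the full-probability union $\bigcup_m\{M\le m\}$ gives the result. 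Alternatively one may bypass the composition, using that $f'_-(S_\cdot)\in W_2^\beta$ (Theorem \ref{main_thm:stat} applied to the difference of convex functions $x\mapsto\int_0^x f'_-(g(y))\,\ud y$) while $g'(X_\cdot)$ is bounded and H\"older continuous of some order $r\in(\beta,\alpha)$, together with the elementary fact that the product of a $W_2^\beta$ function and a bounded $C^r$ function with $r>\beta$ again lies in $W_2^\beta$; Proposition \ref{pr:n-r} then applies directly.
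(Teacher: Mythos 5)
Your proposal is correct, but your primary route is genuinely different from the paper's. The paper does not compose: it keeps the integrand as the product $f'_-(S_\cdot)\,g'(X_\cdot)$, assumes WLOG that $g$ is increasing, and uses the identity $\mathbf{1}_{S_t>a>S_s}=\mathbf{1}_{X_t>g^{-1}(a)>X_s}$ to transfer the level-crossing estimates of Theorem \ref{main_thm:stat} to the factor $f'_-(S_\cdot)$ at the shifted level $g^{-1}(a)$; the remaining cross term $|f'_-(S_t)|\,|g'(X_t)-g'(X_s)|$ in the $W^\beta_2$-norm is controlled by the mean value theorem for $g'$ together with $\E|X_t-X_s|\le\sqrt{W(t,s)}\le C(t-s)^\alpha$ and $\alpha>\beta$. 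This is exactly the ``bypass'' you sketch in your final sentences (a $W^\beta_2$ function times a bounded H\"older function of order $r>\beta$ stays in $W^\beta_2$), so your fallback essentially reproduces the paper's argument. Your main route instead absorbs $g$ into the test function via $h=f\circ g$ and reduces to the preceding corollary; this is logically cleaner (the previous corollary is used as a black box) but it forces you to prove the closure property that $h$ is locally a difference of convex functions, to localize because this holds only locally, and to fuss over $f'_+$ versus $f'_-$ when $g$ is decreasing. All of these steps check out: $h$ is locally Lipschitz, $h'_-=(f'_-\circ g)\,g'$ is locally BV since monotone reparametrization preserves variation and products of bounded BV functions are BV, the affine extension $\tilde h_m$ is a global difference of convex functions, and the kink set is avoided for Lebesgue-a.e.\ $u$ almost surely because $V(u)>0$ for $u>0$. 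What the paper's approach buys is brevity and no structural lemma on d.c.\ functions; what yours buys is reusability of the corollary verbatim and an explicit statement of the (mildly interesting) fact that the class of linear combinations of convex functions is locally closed under composition with strictly monotone $C^2$ maps.
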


\begin{proof}
Without loss of generality we can assume that $g$ is strictly increasing. Now the inverse $g^{-1}$ exists and we have
\begin{equation*}
\mathbf{1}_{S_t > a > S_s} = \mathbf{1}_{X_t > g^{-1}(a) > X_s}.
\end{equation*}
Hence, by following the proof in \cite{a-m-v} we obtain the result from our main theorem if
\begin{equation*}
\int_0^T \int_0^t \frac{|f'_-(S_t)||g'(X_t) - g'(X_s)|}{(t-s)^{\beta+1}}\ud s \,\ud t < \infty.
\end{equation*}
We obtain by Taylor's theorem that 
\begin{equation*}
|g'(X_t) - g'(X_s)| = |g''(\xi)||X_t-X_s|
\end{equation*}
for some random point $\xi$ between $X_s$ and $X_t$. It remains to note that
\begin{equation*}
\E|X_t-X_s| \leq \sqrt{\E|X_t-X_s|^2} =\sqrt{W(t,s)},
\end{equation*}
and the claim follows.
\end{proof}

\begin{rmk}
For financial applications natural candidate is $g(x)=e^x$.
\end{rmk}

Next our aim is to define integrals over the random interval $[0,\tau]$, where $\tau$ is an almost surely finite random variable instead of deterministic time. 

\begin{thm}
Let $X_t\in\mathcal{X}^{\alpha}$ for some $\alpha>\frac{1}{3}$ and let $\tau\leq T$ be a random time. If $\beta<\alpha \wedge(3\alpha-1)$, then for every convex function $f$ we have $f'_-(X_\cdot)\mathbf{1}_{\cdot\leq \tau}\in W_2^{\beta}$ almost surely. 
\end{thm}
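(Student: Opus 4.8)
The plan is to treat the indicator $\mathbf{1}_{\cdot\le\tau}$ as a perturbation of the function $g:=f'_-(X_\cdot)$, which by Theorem \ref{main_thm:stat} already belongs to $W_2^{\beta}$ almost surely. Writing $h(u)=g(u)\mathbf{1}_{u\le\tau}$, the first term of $\|h\|_{2,\beta}$ causes no trouble, since multiplying by an indicator can only decrease the integrand:
\begin{equation*}
\int_0^T \frac{|h(s)|}{s^\beta}\,\ud s \le \int_0^T \frac{|g(s)|}{s^\beta}\,\ud s<\infty .
\end{equation*}
Thus everything reduces to controlling the double integral in the definition of $\|h\|_{2,\beta}$.

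For that double integral, with $0\le u\le s\le T$, I would split the domain according to the position of $u$ and $s$ relative to $\tau$. When $s\le\tau$ both indicators equal one and the integrand is exactly $|g(u)-g(s)|/(u-s)^{1+\beta}$; when $u>\tau$ both indicators vanish and the integrand is zero; the only genuinely new contribution comes from the ``crossing'' region $\{u\le\tau<s\}$, where $h(u)=g(u)$ and $h(s)=0$. Hence
\begin{equation*}
\int_0^T\!\int_0^s \frac{|h(u)-h(s)|}{(u-s)^{1+\beta}}\,\ud u\,\ud s
\le \|g\|_{2,\beta} + \int_\tau^T\!\int_0^\tau \frac{|g(u)|}{(s-u)^{1+\beta}}\,\ud u\,\ud s ,
\end{equation*}
where the first summand is finite by Theorem \ref{main_thm:stat}. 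It remains to show that the crossing term is almost surely finite.

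Integrating out $s$ first (note $s-u\ge\tau-u>0$ on this region) bounds the crossing term by $\tfrac1\beta\int_0^\tau |g(u)|(\tau-u)^{-\beta}\,\ud u$. The main point — and the reason this theorem is easier than Theorem \ref{main_thm:stat} — is that here the integrand involves only the \emph{value} $|f'_-(X_u)|$ rather than increments of $f'_-$, so no level-crossing probability estimates are needed. Since $X$ is continuous on $[0,T]$, the random variable $M:=\sup_{0\le u\le T}|X_u|$ is almost surely finite, and $f'_-$, being nondecreasing, is bounded on $[-M,M]$ by some almost surely finite $K$. Therefore $|g(u)|\le K$ for all $u$, and since $\beta<1$,
\begin{equation*}
\frac1\beta\int_0^\tau \frac{|g(u)|}{(\tau-u)^\beta}\,\ud u
\le \frac{K}{\beta}\int_0^\tau (\tau-u)^{-\beta}\,\ud u
= \frac{K\,\tau^{1-\beta}}{\beta(1-\beta)} <\infty
\end{equation*}
almost surely. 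Combining the three estimates gives $h\in W_2^{\beta}$. The only delicate bookkeeping is the region decomposition and the verification that the jump of $h$ at the single time $\tau$ produces merely an integrable singularity $(\tau-u)^{-\beta}$; there is no obstacle comparable to the crossing-probability analysis underlying Theorem \ref{main_thm:stat}, which is inherited here as a black box for the term $\|g\|_{2,\beta}$.
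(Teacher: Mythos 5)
Your proposal is correct and follows essentially the same route as the paper: the paper likewise bounds the first Besov term by dropping the indicator, splits the double integral according to whether both times are below $\tau$ (handled by the deterministic-time theorem) or straddle $\tau$, and controls the crossing region by $\sup_{s\in[0,T]}|f'_-(X_s)|$ times an integrable singularity of order $(t-\tau)^{-\beta}$. The only cosmetic difference is the order in which you integrate out the two variables in the crossing term, which is immaterial.
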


\begin{proof}
For the first term in the fractional Besov norm we can simply make upper bound
\begin{equation*}
\mathbf{1}_{t\leq \tau} \leq 1
\end{equation*}
and we can proceed as in the proof of our main theorem \ref{main_thm:stat}. 

Consider then the second term in the fractional Besov norm:
\begin{equation*}
\int_0^T\int_0^t \frac{|f'_-(X_t)\textbf{1}_{t\leq \tau}-f'_-(X_s)\textbf{1}_{s\leq \tau}|}{(t-s)^{\beta+1}}\ud s\ud t.
\end{equation*}
Now either $s\leq t\leq\tau$ in which case we can proceed as for deterministic time or we have $t>\tau$ and $s<\tau$. In this case we get
\begin{eqnarray*}
\int_{\tau}^T\!\int_0^{\tau}\frac{|f'_-(X_s)|}{(t-s)^{\beta+1}}\,\ud s\ud t
&\leq& C(\beta)\sup_{s\in[0,T]}|f'_-(X_s)| \int_{\tau}^T (u-\tau)^{-\beta}\,\ud u\\
&\leq& C(\beta)\sup_{s\in[0,T]}|f'_-(X_s)| T^{1-\beta}.
\end{eqnarray*}
This completes the proof.
\end{proof}
Consequently, the existence of the integral is now evident.
\begin{cor}
Let $f$ be a linear combination of convex functions, $Y\in W_1^{1-\beta}$, $X\in\mathcal{X}^{\alpha}$ for some $\alpha>\frac{1}{3}$, and let $\tau\leq T$ be a bounded random time. If $\beta<\alpha \wedge(3\alpha-1)$, then the integral
\begin{equation*}
\int_0^\tau f'_-(X_u)\,\ud Y_u
\end{equation*}
exists almost surely in the sense of generalized Lebesgue--Stieltjes integral.
\end{cor}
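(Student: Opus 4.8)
The plan is to reduce the statement to the theorem immediately preceding it together with Proposition \ref{pr:n-r}, exactly in the spirit of the deterministic corollary. The key observation is that the integral over the random interval $[0,\tau]$ should be read, pathwise, as the generalized Lebesgue--Stieltjes integral on $[0,T]$ of the \emph{truncated} integrand,
\[
\int_0^\tau f'_-(X_u)\,\ud Y_u = \int_0^T f'_-(X_u)\mathbf{1}_{u\leq\tau}\,\ud Y_u,
\]
so that the whole question becomes the existence of the right-hand side for almost every fixed realization $\omega$.

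First I would treat a single convex function $f$. By the preceding theorem, the integrand $f'_-(X_\cdot)\mathbf{1}_{\cdot\leq\tau}$ belongs to $W_2^{\beta}$ almost surely whenever $\beta<\alpha\wedge(3\alpha-1)$. The hypothesis also provides $Y\in W_1^{1-\beta}$, and one checks that $0<\beta<1$ holds automatically, since $\alpha>\tfrac13$ forces $\alpha\wedge(3\alpha-1)\in(0,1)$. Hence Proposition \ref{pr:n-r} applies and yields the existence of the generalized Lebesgue--Stieltjes integral on $[0,T]$, independently of the chosen $\beta$.

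For a linear combination $f=\sum_i c_i f_i$ of convex functions I would use that $f'_-=\sum_i c_i (f_i)'_-$ and that $W_2^{\beta}$ is a linear space, so $f'_-(X_\cdot)\mathbf{1}_{\cdot\leq\tau}\in W_2^{\beta}$ follows termwise from the convex case; linearity of the generalized Lebesgue--Stieltjes integral in its integrand then transfers existence from each $f_i$ to $f$.

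The only point requiring care---and the step I would regard as the main, if mild, obstacle---is justifying that the pathwise integral up to the random time $\tau$ is the correct object and that the deterministic Proposition \ref{pr:n-r} may legitimately be invoked realization by realization. Since $\tau\leq T$ is bounded and almost surely finite, for almost every $\omega$ one fixes $t=\tau(\omega)$ and applies the deterministic theory to the trajectories $u\mapsto f'_-(X_u(\omega))$ and $u\mapsto Y_u(\omega)$; the truncation $\mathbf{1}_{u\leq\tau}$ is precisely what guarantees that the integrand lies in $W_2^{\beta}$ on the whole of $[0,T]$, which is why the preceding theorem was formulated for $f'_-(X_\cdot)\mathbf{1}_{\cdot\leq\tau}$ rather than for the bare restriction.
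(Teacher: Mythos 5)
Your proposal is correct and follows exactly the paper's route: the paper simply remarks that existence is ``now evident'' from the preceding theorem (which places $f'_-(X_\cdot)\mathbf{1}_{\cdot\leq\tau}$ in $W_2^{\beta}$ almost surely) combined with Proposition \ref{pr:n-r} and $Y\in W_1^{1-\beta}$. Your additional remarks on linearity over combinations of convex functions and on the pathwise, realization-by-realization application of the deterministic result are exactly the details the paper leaves implicit.
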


\begin{rmk}
Let $g$ be strictly monotone, and set $S=g(X)$. Then similar results hold for integrals
\begin{equation*}
\int_0^\tau f'_-(S_u)g'(X_u)\,\ud Y_u.
\end{equation*}
\end{rmk}

\subsection*{Existence of Föllmer Integral}

As noted e.g. by Zähle \cite{z} we can sometimes approximate the generalized Lebesgue--Stieltjes integral with Riemann--Stieltjes sums. This is the topic of Theorem \ref{thm:rs} below. The proof follows exactly the same arguments as for the particular case for fractional Brownian motion in \cite{a-m-v}. Hence we only give the idea of the proof and details are left to the reader.

\begin{thm}\label{thm:rs}
Let $f$ be a linear combination of convex functions, $Y\in W_1^{1-\beta}$ and $X\in\mathcal{X}^{\alpha}$ for some $\alpha>\frac{1}{3}$. If $\beta<\alpha\wedge(3\alpha-1)$, then 
\begin{equation*}
\sum_{j=1}^{k(n)} f'_-(X_{u_j^n})(Y_{u_{j+1}^n}-Y_{u_j^n}) \rightarrow \int_0^T f'_-(X_u)\ud Y_u
\end{equation*}
almost surely for any partition $\pi_n = \{0=u_0^n<\ldots<u_{k(n)}^n=T\}$ such that $|\pi_n|\rightarrow 0$.
\end{thm}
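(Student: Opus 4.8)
The plan is to represent both the approximating sums and their limit as generalized Lebesgue--Stieltjes integrals and to control their difference by the Nualart--R\u{a}\c{s}canu estimate of Theorem \ref{t:n-r}. Both the Riemann--Stieltjes sums and the generalized integral are linear in the integrand, so, together with the reduction to compactly supported $f''$ and the approximation scheme already used in Theorem \ref{main_thm:stat}, it is enough to treat $f(x)=(x-a)^+$; then $f'_-(X_u)=\mathbf 1_{X_u>a}=:g(u)$. For a fixed partition $\pi_n$ set $\underline u=\max\{u_j^n:u_j^n\le u\}$ and $g_n(u)=\mathbf 1_{X_{\underline u}>a}$. Since $g_n$ is constant on each cell of $\pi_n$, additivity of the integral together with the consistency property $\int_0^T\mathbf 1_{(u_j^n,u_{j+1}^n]}\,\ud Y=Y_{u_{j+1}^n}-Y_{u_j^n}$ identifies the Riemann--Stieltjes sum with $\int_0^T g_n\,\ud Y$. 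Both $g$ (by Theorem \ref{main_thm:stat}) and the step function $g_n$ (elementarily) lie in $W_2^\beta$ almost surely, so Theorem \ref{t:n-r} yields
\[
\Big|\sum_{j}g(u_j^n)\big(Y_{u_{j+1}^n}-Y_{u_j^n}\big)-\int_0^T g\,\ud Y\Big|=\Big|\int_0^T(g_n-g)\,\ud Y\Big|\le \Lambda_\beta(Y)\,\|g_n-g\|_{2,\beta},
\]
where $\Lambda_\beta(Y)=\sup_{0\le s<T}\big|D^{1-\beta}_{T-}Y_{T-}(s)\big|$ is finite almost surely because $Y\in W_1^{1-\beta}$. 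Hence the theorem reduces to showing $\|g_n-g\|_{2,\beta}\to0$ almost surely.

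I would then control the two parts of the Besov norm separately, starting with the single integral. Because $X$ is continuous, $X_{\underline u}\to X_u$, and $\mathbf 1_{(a,\infty)}$ is continuous at $X_u$ whenever $X_u\ne a$; moreover $\{u:X_u=a\}$ has zero Lebesgue measure almost surely, since its expected measure is $\int_0^T\P(X_u=a)\,\ud u=0$ (each $X_u$ has a density as $V(u)=R(u,u)>0$ for $u>0$ by assumption (i) of Definition \ref{defn:Xalpha}). Thus $g_n(u)\to g(u)$ for almost every $u$, and since $|g_n-g|\le1$ and $\int_0^T u^{-\beta}\,\ud u<\infty$, dominated convergence gives $\int_0^T|g_n(u)-g(u)|\,u^{-\beta}\,\ud u\to0$ almost surely.

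The main obstacle is the double integral, whose integrand
\[
\frac{\big|(g_n(u)-g(u))-(g_n(s)-g(s))\big|}{(u-s)^{1+\beta}}
\]
again tends to $0$ for almost every $(s,u)$, but the singular kernel $(u-s)^{-1-\beta}$ rules out a crude uniform majorant, since for fixed $(s,u)$ the quantity $\sup_n|g_n(u)-g_n(s)|$ can equal $1$ even as $u-s\to0$. I would bound the numerator by $|g_n(u)-g_n(s)|+|g(u)-g(s)|$; the $g$-term is $n$-independent and integrable by Theorem \ref{main_thm:stat}, while the $g_n$-term is nonzero only when $u,s$ lie in different cells, where it equals the crossing indicator $\mathbf 1_{X_{\underline s}<a<X_{\underline u}}+\mathbf 1_{X_{\underline u}<a<X_{\underline s}}$. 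The crux is then to prove that the Besov double integral of $g_n$ converges to that of $g$; this is exactly where I would re-run the computation behind Theorem \ref{main_thm:stat}, splitting $\{0<s<u<T\}$ into the delicate near-diagonal region of neighbouring cells and the off-diagonal region, and invoking the universal level-crossing bound of Lemma \ref{lma:crossing2} (with $W(\underline u,\underline s)\le w^*(\underline u-\underline s)$ and assumptions (ii)--(iv) of Definition \ref{defn:Xalpha}). Given this convergence of seminorms and the almost everywhere convergence of the integrand, a generalized (Pratt-type) dominated convergence argument forces the double integral to vanish.

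Combining the two estimates yields $\|g_n-g\|_{2,\beta}\to0$ almost surely, and the displayed bound then gives the claimed convergence for $f=(\cdot-a)^+$. The passage to an arbitrary linear combination of convex functions is finally carried out verbatim as in Theorem \ref{main_thm:stat}, completing the proof. I expect the seminorm convergence of $g_n$ in the near-diagonal region to be the only genuinely technical point, which is precisely why the authors refer to \cite{a-m-v} rather than reproducing the computation.
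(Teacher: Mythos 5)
Your overall strategy coincides with the paper's: reduce to compactly supported $f''$ and then to $f(x)=(x-a)^+$, rewrite the Riemann--Stieltjes sum as the generalized Lebesgue--Stieltjes integral of the step interpolant $g_n$, bound the error by $\sup_s\big|D^{1-\beta}_{T-}Y_{T-}(s)\big|\,\|g_n-g\|_{2,\beta}$ via Theorem \ref{t:n-r}, and reduce everything to $\|g_n-g\|_{2,\beta}\to0$ --- this is exactly the paper's route (its $h_n$ is your $g-g_n$ up to sign). Your handling of the first Besov term and of the off-diagonal region is fine. Where you diverge is the final convergence step: the paper asserts an $n$-independent, pathwise integrable majorant for $|h_n(t)-h_n(s)|(t-s)^{-1-\beta}$ and applies dominated convergence $\omega$ by $\omega$, whereas you (rightly) doubt such a crude majorant near the diagonal and substitute a Pratt-type argument.

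As set up, however, the Pratt step has a genuine gap. Pratt's lemma requires $\iint G_n\to\iint G$ \emph{for the fixed path}, i.e.\ almost surely, where $G_n=\big(|g_n(u)-g_n(s)|+|g(u)-g(s)|\big)(u-s)^{-1-\beta}$. The tool you propose for this --- re-running the computation of Theorem \ref{main_thm:stat} with the level-crossing bound of Lemma \ref{lma:crossing2} --- controls $\E\iint G_n$ only. Combined with Fatou (which gives $\liminf_n\iint G_n\ge\iint G$ a.s.), convergence of the expectations yields $\iint G_n\to\iint G$ in $L^1$, hence in probability and almost surely only along subsequences; this delivers convergence in probability of the Riemann--Stieltjes sums, not the almost sure convergence asserted in the theorem for an arbitrary sequence of partitions. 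To close the gap you need either a genuinely pathwise domination --- which is what the paper imports from \cite{a-m-v}, and note that the majorant must also account for crossings between $X_{\underline t}$ and $X_t$, since $|(g_n-g)(t)-(g_n-g)(s)|$ can equal $1$ with no crossing of $a$ between $X_s$ and $X_t$ --- or a quantitative bound on $\E\|g_n-g\|_{2,\beta}$ with a rate in $|\pi_n|$ that is summable, followed by Borel--Cantelli. Apart from this point, at which the paper itself is only a sketch deferring to \cite{a-m-v}, your proposal matches the paper's argument.
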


\begin{proof}
Again we can assume that the measure $f''$ has compact support. Now,
\begin{equation*}
\int_0^T f'_-(X_u)\,\ud Y_u - \sum_{j=1}^{k(n)} f'_-(X_{u_j^n})(Y_{u_{j+1}^n}-Y_{u_j^n}) 
= \int_0^T h_n(X_u) \ud Y_u,
\end{equation*}
where
\begin{equation*}
h_n(u)=\sum_{j=1}^{k(n)}\left(f'_-(X_{u_j^n}) - f'_-(X_u)\right)\mathbf{1}_{u_j^n<u\leq u_{j+1}^n}.
\end{equation*}
To conclude the proof we have to show that
$$
{\|h_n\|}_{\beta,2} \to 0
$$
almost surely. Following arguments in \cite{a-m-v} we obtain an integrable upper bound in both terms of the fractional Besov norm, and hence the result follows by using the dominated convergence theorem. More precisely, we have
\begin{equation*}
|h_n(t)| \leq 2 \sup_{0\leq u\leq T}|f'_-(X_u)|,
\end{equation*}
and
\begin{equation*}
|h_n(t) - h_n(s)| \leq C\int \textbf{1}_{X_s<a<X_t}\, f''(\ud a)
\end{equation*}
on set $\{X_s < a < X_t\}$ and
\begin{equation*}
|h_n(t) - h_n(s)| \leq C\int \textbf{1}_{X_t<a<X_s}\,f''(\ud a)
\end{equation*}
on set $\{X_t < a < X_s\}$. Hence we have integrable upper bound in all the cases and hence the result follows by dominated convergence theorem together with the fact that
\begin{equation*}
|h_n(t)| \to 0
\end{equation*}
almost surely.
\end{proof}

\begin{rmk}
Let $g$ be strictly monotone and set $S=g(X)$. Let $\tau\le T$ be a random time.  Then similar result holds for integrals of the form
\begin{equation*}
\int_0^\tau f'_-(S_u)g'(X_u)\, \ud Y_u.
\end{equation*}
\end{rmk}

\subsection*{Existence of Mixed Integrals}

The particular reason why we considered integrals of form
$
\int_0^T f'_-(X_u)\ud Y_u
$ with arbitrary process $Y$  instead of $X$ is that now we can apply our result for processes of type
$$
Y = M + X
$$ 
where $M$ is a centered Gaussian martingale with a Lipschitz continuous variance function $\la M\ra$. These kind of mixed processes are especially interesting in mathematical finance, see \cite{b-s-v-0,b-s-v}.

Note that Lipschitz continuity of the variance on $M$ implies that
$$
\E[w_M^*(t)]\leq Ct.
$$

\begin{thm}
\label{thm:existence_summa}
Let $M$ be a centered Gaussian martingale with a Lipschitz variance and let $X$ be an $\alpha$-H\"older continuous Gaussian process for some $\alpha>1/2$. Denote $Y=M+X$. Let $f$ be a linear combination of convex functions and let $\tau\le T$ be a stopping time. Assume $Y$ satisfies assumptions (i), (iii) and (iv) of Definition \ref{defn:Xalpha}. Furthermore, assume $Y$ is adapted to the filtration $\mathcal{F}^M$ generated by $M$. Then the integral 
\begin{equation}\label{bilin}
\int_0^\tau f_-'(Y_u)\,\ud Y_u = \int_0^\tau f_-'(Y_u)\,\ud M_u + \int_0^\tau f_-'(Y_u)\,\ud X_u
\end{equation}
exists as a Föllmer integral.
\end{thm}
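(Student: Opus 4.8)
The plan is to realize the F\"ollmer integral as the almost sure limit, along a refining sequence of partitions $(\pi_n)$, of the forward sums $\sum_j f'_-(Y_{t_{j-1}^n})(Y_{t_j^n}-Y_{t_{j-1}^n})$, and to obtain it by splitting each such sum according to $Y=M+X$ into an $M$-part and an $X$-part that are then treated separately. As in the proof of Theorem \ref{main_thm:stat}, I would first reduce to a convex function $f$ whose second-derivative measure $f''$ has compact support, so that $f'_-$ is bounded and $f'_-(Y_\cdot)$ is a bounded process; the general case follows by the same approximation argument. The boundedness of $f'_-(Y_\cdot)$ is what makes the martingale part tractable, and the stopping time $\tau$ is carried along exactly as in the earlier random-time results, by inserting the factor $\mathbf{1}_{\cdot\le\tau}$ and using that $\{u\le\tau\}\in\mathcal{F}^M_u$.

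The heart of the matter is to show that $f'_-(Y_\cdot)\in W_2^{\beta}$ almost surely for a suitable $\beta$, and this is where the main obstacle lies: $Y$ does not belong to $\mathcal{X}^{\alpha}$, since assumption (ii) of Definition \ref{defn:Xalpha} is unavailable. The key observation is that (ii) enters the proof of Theorem \ref{main_thm:stat} only through the upper bound $w^*(t)\le Ct^{2\alpha}$ on the worst-case incremental variance, and this bound we can supply directly for $Y$. Indeed, $\E[(M_t-M_s)^2]=\la M\ra_t-\la M\ra_s\le C|t-s|$ by Lipschitz continuity of the variance, while $\E[(X_t-X_s)^2]\le C|t-s|^{2\alpha}$ by $\alpha$-H\"older continuity; bounding the cross term by Cauchy--Schwarz and using $\alpha>\tfrac12$ yields $w_Y^*(t)\le Ct$ for small $t$. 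Thus $Y$ plays the role of a process with effective exponent $\tfrac12$, and because $\tfrac12>\tfrac13$ the level-crossing estimate of Lemma \ref{lma:crossing2}---which only requires the strictly positive, bounded covariance guaranteed by assumptions (i), (iii), (iv)---feeds into the very same splitting $I=I_1+I_2+I_3+I_4$ and the same estimates as in Theorem \ref{main_thm:stat}, now with $\alpha=\tfrac12$. I expect verifying that these estimates go through verbatim with the substitute bound on $w_Y^*$ to be the most delicate part, but the outcome is $f'_-(Y_\cdot)\in W_2^{\beta}$ almost surely for every $\beta<\tfrac12$.

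Fix now $\beta\in(1-\alpha,\tfrac12)$, a nonempty interval precisely because $\alpha>\tfrac12$. For the $X$-part, $\alpha$-H\"older continuity gives $X\in C^{\alpha}\subset W_1^{1-\beta}$ by Remark \ref{r:rmk1} (as $1-\beta<\alpha$), so by Proposition \ref{pr:n-r} the generalized Lebesgue--Stieltjes integral $\int_0^{\tau}f'_-(Y_u)\,\ud X_u$ exists; moreover, repeating the dominated-convergence argument of Theorem \ref{thm:rs} (telescoping differences bounded via the same level-crossing integrals and controlled through the estimate of Theorem \ref{t:n-r}) shows that the forward sums $\sum_j f'_-(Y_{t_{j-1}^n})(X_{t_j^n}-X_{t_{j-1}^n})$ converge almost surely to this integral. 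For the $M$-part, since $Y$ is adapted to $\mathcal{F}^M$ and $f'_-(Y_\cdot)$ is bounded and measurable, the integrand is a bounded adapted process, so the forward sums $\sum_j f'_-(Y_{t_{j-1}^n})(M_{t_j^n}-M_{t_{j-1}^n})$ are martingale transforms converging, along the refining sequence, to the It\^o integral $\int_0^{\tau}f'_-(Y_u)\,\ud M_u$; for a martingale integrator and an adapted integrand the F\"ollmer and It\^o integrals coincide.

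Adding the two convergent parts shows that the forward sums $\sum_j f'_-(Y_{t_{j-1}^n})(Y_{t_j^n}-Y_{t_{j-1}^n})$ converge almost surely, which is exactly the existence of the F\"ollmer integral $\int_0^{\tau}f'_-(Y_u)\,\ud Y_u$ together with the decomposition (\ref{bilin}). In summary, the whole argument hinges on transferring the $W_2^{\beta}$-membership proof of Theorem \ref{main_thm:stat} from $\mathcal{X}^{\alpha}$ to $Y=M+X$ by replacing assumption (ii) with the explicit incremental-variance bound $w_Y^*(t)\le Ct$, after which the $X$-part is a Young/generalized Lebesgue--Stieltjes integral and the $M$-part is a classical It\^o integral.
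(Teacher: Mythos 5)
Your proposal is correct and follows essentially the same route as the paper: decompose $\int_0^\tau f'_-(Y_u)\,\ud Y_u$ into the martingale part (an It\^o integral, hence a F\"ollmer integral along a suitable refining sequence) and the $X$-part (a generalized Lebesgue--Stieltjes integral obtained from $f'_-(Y)\mathbf{1}_{\cdot\le\tau}\in W_2^\beta$ for $\beta\in(1-\alpha,1/2)$, converted to a F\"ollmer integral via Theorem \ref{thm:rs}). The paper compresses your explicit estimate $w_Y^*(t)\le Ct$ into the one-line assertion that $Y\in\mathcal{X}^{1/2}$; your spelled-out justification, and your observation that only the upper bound on $w^*$ is actually used in Theorem \ref{main_thm:stat}, is a faithful elaboration rather than a different argument.
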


\begin{rmk}

Note that we do not assume that the processes $M$ and $X$ are independent. 

\end{rmk}

\begin{proof}
Consider first the first integral on the right-hand side of (\ref{bilin}).  Since $M$ is a martingale this integral exists as an It\^o integral, and by using suitable subdivision sequence $(\pi_n)_{n=0}^\infty$ it exists pathwise as a Föllmer integral.

Consider then the second integral on the right-hand side of (\ref{bilin}).  Since $M$ has Lipschitz variance we have that $Y\in\mathcal{X}^{1/2}$. 
Consequently, $f_-'(Y)\mathbf{1}_{\cdot\le \tau}\in W^\beta_2$ for all $\beta\in(1-\alpha,1/2)$.  Therefore, the generalized Lebesgue--Stieltjes integral 
$
\int_0^\tau f'_-(Y_u)\,\ud X_u
$
exists.

Finally, all the integrals in (\ref{bilin}) can also be understood as Föllmer integrals due to Theorem \ref{thm:rs}.
\end{proof}

\subsection*{It\^o--Tanaka Formula}

We begin with the following It\^o formula for smooth functions. The proof is based on Taylor expansion and is the same as in the semimartingale case, or indeed, in the classical case.

\begin{prop}\label{prop:ito}
Let $X\in\mathcal{X}^{\alpha}$ with $\alpha>\frac{1}{2}$ and let $f\in C^2$. Then 
$$
f(X_T)=f(X_0) + \int_0^T f'(X_u)\,\ud X_u 
$$ 
\end{prop}

The It\^o formula of Proposition \ref{prop:ito} can be extended to convex functions.  Indeed, the arguments presented in \cite{a-m-v} imply that:

\begin{thm}
\label{thm:ito}
Let $X\in\mathcal{X}^{\alpha}$ with $\alpha>\frac{1}{2}$ and let $f$ be a linear combination of convex 
functions. Then 
$$
f(X_T) = f(X_0) + \int_0^T f'_-(X_u)\,\ud X_u.
$$
\end{thm}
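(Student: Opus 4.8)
The plan is to establish the formula for a single convex $f$ (the general case following by linearity, since both $f\mapsto f'_-$ and the integral are linear) by approximating $f$ with smooth convex functions, applying the smooth It\^o formula of Proposition \ref{prop:ito}, and passing to the limit, with the integral estimate of Theorem \ref{t:n-r} doing the heavy lifting. First I would carry out exactly the reductions used in the proof of Theorem \ref{main_thm:stat}: reduce to a convex $f$ whose second-derivative measure $f''$ has compact support, and then, through the representation of such $f$ as a superposition of the functions $(x-a)^+$, reduce to the building block $f_a(x)=(x-a)^+$, for which $f'_{a,-}(x)=\mathbf{1}_{x>a}$ is bounded. Note that no local-time term appears, since the smooth formula we start from has no second-order contribution ($X$ being $\alpha$-H\"older with $\alpha>\tfrac12$ has zero quadratic variation).

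Next I would fix the regularity exponent. Since $\alpha>\tfrac12$ the interval $(1-\alpha,\alpha)$ is nonempty; choosing $\beta$ in it gives $\beta<\alpha=\alpha\wedge(3\alpha-1)$, so Theorem \ref{main_thm:stat} yields $f'_-(X_\cdot)\in W_2^\beta$ almost surely, while $X\in C^{\alpha-\epsilon}\subset W_1^{1-\beta}$ almost surely for small $\epsilon$ by Remark \ref{r:rmk1}. Hence the target integral $\int_0^T f'_-(X_u)\,\ud X_u$ is well defined as a generalized Lebesgue--Stieltjes integral. I would then pick $C^2$ convex functions $f_n$ with $f_n\to f$ locally uniformly and $f_n'\to f'_-$ at every continuity point of $f'_-$, and apply Proposition \ref{prop:ito} to get $f_n(X_T)=f_n(X_0)+\int_0^T f_n'(X_u)\,\ud X_u$. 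The left-hand side converges almost surely to $f(X_T)-f(X_0)$ by continuity of $X$ and local uniform convergence of $f_n$.

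The crux is the convergence of the integrals. By Theorem \ref{t:n-r} applied to the integrand $f_n'(X_\cdot)-f'_-(X_\cdot)$ and integrator $X$,
\[
\left|\int_0^T f_n'(X_u)\,\ud X_u-\int_0^T f'_-(X_u)\,\ud X_u\right|\le \sup_{0\le s<T}\big|D^{1-\beta}_{T-}X_{T-}(s)\big|\,\big\|f_n'(X_\cdot)-f'_-(X_\cdot)\big\|_{2,\beta},
\]
and the supremum factor is almost surely finite because $X\in W_1^{1-\beta}$. It then remains to show the Besov norm tends to zero. For the first term of $\|\cdot\|_{2,\beta}$ I would use dominated convergence: $f_n'(X_s)\to f'_-(X_s)$ for almost every $s$, since for $s>0$ the variable $X_s$ has a density and thus almost surely avoids the countable discontinuity set of $f'_-$, and the integrand is uniformly bounded. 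For the double-integral term I would dominate the increment $|(f_n'-f'_-)(X_u)-(f_n'-f'_-)(X_s)|$ on the level-crossing events $\{X_s<a<X_u\}$ and $\{X_u<a<X_s\}$ by the same crossing indicators estimated in Theorem \ref{main_thm:stat}, whose $(u-s)^{-1-\beta}$-weighted double integral was shown there to be integrable; dominated convergence again gives convergence to zero.

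The main obstacle is precisely this last dominated-convergence step: I must exhibit a single almost-surely integrable majorant, uniform in $n$, for the weighted difference quotients, together with their pointwise convergence to zero. This is what forces the preliminary reduction to $(x-a)^+$, which keeps the approximating derivatives $f_n'$ uniformly bounded and makes the majorant independent of $n$; the majorant itself is the crossing-probability bound already controlled via Lemma \ref{lma:crossing2} in the proof of Theorem \ref{main_thm:stat}, so those estimates transfer essentially verbatim. This mirrors the argument of \cite{a-m-v}.
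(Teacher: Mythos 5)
Your proposal is correct and follows essentially the route the paper intends: the paper offers no proof of Theorem \ref{thm:ito} beyond deferring to the arguments of \cite{a-m-v}, and those arguments are precisely your scheme of reducing to compactly supported $f''$, mollifying $f$ by smooth convex $f_n$, applying Proposition \ref{prop:ito}, and passing to the limit via the estimate of Theorem \ref{t:n-r} together with dominated convergence of $\|f_n'(X_\cdot)-f'_-(X_\cdot)\|_{2,\beta}$ using the crossing-indicator majorants already controlled in Theorem \ref{main_thm:stat} (the same approximation-and-limit step is invoked again in the paper's proof of Theorem \ref{thm:mg_ito}). The one technicality you rightly flag --- that the increments of the mollified derivatives are dominated by crossing indicators at slightly shifted levels --- is handled exactly as in \cite{a-m-v}, so no genuine gap remains.
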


\begin{cor} 
Let $g$ be a strictly monotone smooth function and set $S=g(X)$ for some $X\in\mathcal{X}^\alpha$ with $\alpha>\frac{1}{2}$. Let $f$ be a linear combination of convex functions and let $\tau\le T$ be a random time.  Then 
$$
f(S_\tau) = f(S_0) + \int_0^\tau f'_-(S_u)\,\ud S_u.
$$
\end{cor}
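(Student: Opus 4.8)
The plan is to reduce the claim to the It\^o--Tanaka formula of Theorem~\ref{thm:ito} for the Gaussian process $X$ itself. The essential observation is that $S=g(X)$ is in general \emph{not} Gaussian, so Theorem~\ref{thm:ito} cannot be applied to $S$ directly; instead I would absorb the smooth change of variables into the convex function by writing $f(S_u)=(f\circ g)(X_u)$ and working with the composition $\phi:=f\circ g$ on the Gaussian scale. Without loss of generality $g$ is strictly increasing (the decreasing case is symmetric, and replacing $f'_-$ by $f'_+$ alters the integrand only on a countable set and hence does not affect the pathwise integral).

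First I would verify that $\phi=f\circ g$ is again a linear combination of convex functions. Since $f$ is such a combination, its left derivative $f'_-$ has locally finite variation; since $g\in C^2$ is a strictly increasing diffeomorphism, the composition $f'_-\circ g$ again has locally finite variation (total variation is preserved under composition with a monotone bijection), and multiplying by $g'\in C^1$ preserves this property. Hence $\phi'_-=(f'_-\circ g)\,g'$ has locally finite variation, so $\phi$ is a difference of two convex functions, i.e.\ a linear combination of convex functions, and the one-sided chain rule gives $\phi'_-(x)=f'_-(g(x))\,g'(x)$.

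Next I would apply Theorem~\ref{thm:ito} to $\phi$ and $X\in\mathcal{X}^{\alpha}$, $\alpha>1/2$, on each interval $[0,t]$, obtaining
$$
f(S_t)=\phi(X_t)=\phi(X_0)+\int_0^t \phi'_-(X_u)\,\ud X_u=f(S_0)+\int_0^t f'_-(S_u)\,g'(X_u)\,\ud X_u
$$
for each fixed $t\in[0,T]$ almost surely. The left-hand side is continuous in $t$ by continuity of $X$ and $\phi$, and the right-hand side is continuous in its upper limit; since the identity holds for every fixed $t$ almost surely, a density argument over a countable dense set yields it simultaneously for all $t\in[0,T]$ on one event of full probability. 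Evaluating pathwise at $t=\tau(\omega)$ then produces the formula at the random time $\tau$.

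It remains to identify $\int_0^\tau f'_-(S_u)\,g'(X_u)\,\ud X_u$ with $\int_0^\tau f'_-(S_u)\,\ud S_u$. Since $\alpha>1/2$, the process $X$ has zero quadratic variation, and the It\^o--F\"ollmer formula (Lemma~\ref{ito-follmer}) applied to $g\in C^2$ gives $\ud S_u=g'(X_u)\,\ud X_u$. Concretely, along a refining sequence of partitions a Taylor expansion gives $S_{u_{j+1}}-S_{u_j}=g'(X_{u_j})(X_{u_{j+1}}-X_{u_j})+O\big((X_{u_{j+1}}-X_{u_j})^2\big)$, and since $f'_-(S)$ is bounded on $[0,\tau]$ and $\sum_j (X_{u_{j+1}}-X_{u_j})^2\to\la X\ra_\tau=0$, the Riemann--Stieltjes sums for the two integrals share the same limit by Theorem~\ref{thm:rs}. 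I expect this substitution to be the main obstacle: one must control the second-order error terms uniformly, which is exactly where the hypothesis $\alpha>1/2$ (equivalently, the vanishing quadratic variation of $X$) enters. By contrast, the composition fact of the first step is a routine real-analysis statement.
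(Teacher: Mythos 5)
Your argument is correct, and it reaches the conclusion by a genuinely different route from the one the paper's surrounding machinery is built for (the paper states this corollary without proof, but its intended derivation is visible from the neighbouring results). You absorb the change of variables into the convex function: you check that $\phi=f\circ g$ is again a difference of convex functions, because $\phi'_-=(f'_-\circ g)\,g'$ is locally of bounded variation (composition with a monotone bijection preserves variation, and multiplication by the $C^1$ function $g'$ preserves local BV), and then you invoke Theorem \ref{thm:ito} for the Gaussian process $X$ as a black box. The only remaining work is the substitution $\ud S_u=g'(X_u)\,\ud X_u$, which you correctly reduce to the vanishing of $\sum_j(X_{u_{j+1}^n}-X_{u_j^n})^2$; this holds along \emph{any} vanishing-mesh sequence because $X$ is H\"older of every order $r<\alpha$ with $\alpha>\tfrac12$, so the quadratic sums are $O(|\pi_n|^{2r-1})$, and Theorem \ref{thm:rs} (applied to $\phi$ and $Y=X$) identifies the limit of the forward sums with the generalized Lebesgue--Stieltjes integral. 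The paper instead keeps $f$ and $g$ separate: the second corollary after Theorem \ref{main_thm:stat} establishes $f'_-(S_\cdot)\in W_2^\beta$ directly through the identity $\mathbf{1}_{S_t>a>S_s}=\mathbf{1}_{X_t>g^{-1}(a)>X_s}$ and a Taylor bound on $g'(X_t)-g'(X_s)$, after which the random-time theorem and the remark following Theorem \ref{thm:rs} give $\int_0^\tau f'_-(S_u)g'(X_u)\,\ud X_u$ and the It\^o argument is rerun. Your composition trick is cleaner for this particular corollary since it reuses Theorem \ref{thm:ito} verbatim; the paper's route has the advantage of producing the Besov regularity of $f'_-(S_\cdot)$ itself, which is what is needed later when a nonzero quadratic variation, and hence the local time of $S$, enters. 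Two points worth making explicit in a final write-up: the one-sided chain rule $\phi'_-(x)=f'_-(g(x))g'(x)$ uses that $g$ is increasing (the decreasing case exchanges left and right derivatives, as you note, and the discrepancy set $\{u:X_u\in D\}$ for $D$ countable is Lebesgue-null since $V(u)>0$ for $u>0$), and the passage from fixed $t$ to the random time $\tau$ requires continuity of $t\mapsto\int_0^t\phi'_-(X_u)\,\ud X_u$, which follows from the estimate of Theorem \ref{t:n-r}.
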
 

Let us now consider the non-smooth case. 

Föllmer \cite{Follmer} showed that for any process $Y$ with finite quadratic variation $\la Y\ra$ and $f\in C^2$  we have
$$
f(Y_T)=f(Y_0) + \int_0^T f'(Y_u)\,\ud Y_u + \frac{1}{2}\int_0^T f''(Y_u) \,\ud \la Y\ra_u,
$$
which also implies the existence of the Föllmer integral $\int_0^T f'(Y_u)\,\ud Y_u$. We will extend this result to convex functions $f$.  We will, however, not consider general quadratic variation processes. Instead, we consider processes that are of the form $Y = M + X$ as in Theorem \ref{thm:existence_summa}.

The non-trivial quadratic variation gives rise to local time:

\begin{defn}
Let $Y$ be a continuous process with quadratic variation $\la Y\ra$. Its \emph{local time} $L^a_t(Y)$ is the process defined by the \emph{occupation time formula}
\begin{equation}\label{otf}
\int_{-\infty}^\infty g(a)L_t^a(Y)\,\ud a = \int_0^t g(Y_u)\,\ud \la Y \ra_u
\end{equation}
almost surely for every bounded, Borel measurable function $g$.
\end{defn}

\begin{thm}\label{thm:mg_ito}
Let $f$ and $Y=M+X$ satisfy conditions of Theorem \ref{thm:existence_summa}. Then there exists a local time process $L_t^a(Y)$ such that 
\begin{equation}
\label{ito:sum}
f(Y_t) = f(Y_0) + \int_0^t f'_-(Y_u)\,\ud Y_u + \frac{1}{2}\int_{-\infty}^{\infty} L_t^a(Y)\,f''(\ud a).
\end{equation}
\end{thm}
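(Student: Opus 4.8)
The plan is to prove the formula first for smooth $f$ via the It\^o--F\"ollmer formula, then to produce the local time from a Tanaka-type identity, and finally to pass to a general linear combination of convex functions by approximation.

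First I would record the structure of the integrator. Since $X$ is $\alpha$-H\"older continuous with $\alpha>\frac12$, its quadratic variation vanishes along any sequence of partitions whose mesh tends to zero, while the continuous Gaussian martingale $M$ is a quadratic variation process along a suitable refining sequence $(\pi_n)$. Hence, by remark (iii) following the definition of quadratic variation, $Y=M+X$ is a quadratic variation process with $\la Y\ra_t=\la M\ra_t$, which is Lipschitz and therefore absolutely continuous. Applying Lemma \ref{ito-follmer} to $Y$ and to any $f\in C^2$ then yields
\begin{equation}\label{plan:c2}
f(Y_t)=f(Y_0)+\int_0^t f'(Y_u)\,\ud Y_u+\frac12\int_0^t f''(Y_u)\,\ud\la Y\ra_u.
\end{equation}

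Next I would construct the local time. For $f(x)=(x-a)^+$ the F\"ollmer integral $\int_0^t \mathbf{1}_{Y_u>a}\,\ud Y_u$ exists by Theorem \ref{thm:existence_summa}, so I may define
$$
L_t^a(Y):=2\Big[(Y_t-a)^+-(Y_0-a)^+-\int_0^t \mathbf{1}_{Y_u>a}\,\ud Y_u\Big].
$$
Because $Y$ is continuous on $[0,t]$ one checks directly that $L_t^a(Y)=0$ for $|a|$ large, so $a\mapsto L_t^a(Y)$ has compact support. To see that this object is a genuine occupation density I would test \eqref{plan:c2} against a compactly supported continuous $g$: writing $G(x)=\int_{\R}(x-a)^+g(a)\,\ud a$ (up to an affine term, which cancels) gives $G\in C^2$ with $G''=g$, and substituting $G$ into \eqref{plan:c2} while interchanging $\int_{\R}\cdots\ud a$ with $\int_0^t\cdots\ud Y_u$ produces, after comparison with the definition above, the occupation time formula \eqref{otf}; the case of general bounded Borel $g$ then follows by a standard monotone-class argument. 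This interchange is the one genuinely delicate step, and I would justify it by splitting $\ud Y=\ud M+\ud X$, using the classical stochastic Fubini theorem for the It\^o integral against $M$ and the norm estimate of Theorem \ref{t:n-r} together with ordinary Fubini for the generalized Lebesgue--Stieltjes integral against $X$.

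Finally, for a general linear combination of convex functions $f$ I would mollify to obtain convex $f_n\in C^2$ with $f_n\to f$ locally uniformly and $f_n''(a)\,\ud a\to f''(\ud a)$ vaguely. Rewriting the Hessian term in \eqref{plan:c2} through the occupation time formula gives
$$
f_n(Y_t)=f_n(Y_0)+\int_0^t f'_n(Y_u)\,\ud Y_u+\frac12\int_{-\infty}^\infty L_t^a(Y)\,f_n''(a)\,\ud a,
$$
and I would let $n\to\infty$: the boundary terms converge since $Y$ is bounded on $[0,t]$; the integral $\int_0^t f'_n(Y_u)\,\ud Y_u\to\int_0^t f'_-(Y_u)\,\ud Y_u$ converges by the stability estimate of Theorem \ref{t:n-r} for the $X$-part (because $\|f'_n(Y_\cdot)-f'_-(Y_\cdot)\|_{2,\beta}\to0$) and by $L^2$-convergence of It\^o integrals for the $M$-part; and the last term converges to $\frac12\int_{-\infty}^\infty L_t^a(Y)\,f''(\ud a)$ by vague convergence. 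The hardest parts will be the stochastic Fubini just described and the regularity in $a$ of the local time needed for this last step. For the latter I would use the joint continuity of $a\mapsto L_t^a(Y)$, which is available because $Y$ is Gaussian and satisfies the local-nondeterminism-type condition (iv) of Definition \ref{defn:Xalpha} (cf.\ Berman \cite{b}); quantitatively, the level-crossing bound of Lemma \ref{lma:crossing2} controls the increments of the occupation measure in $a$ and furnishes a continuous version.
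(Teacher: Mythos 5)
Your proposal is correct and shares the paper's skeleton (Itô--Föllmer formula for $C^2$ functions, Tanaka identities for $(x-a)^+$ to produce the local time, one Fubini interchange split into a stochastic Fubini for the $M$-part and an ordinary Fubini for the pathwise part, then mollification), but it organizes the identification step differently. You define $L_t^a(Y)$ explicitly by the Tanaka identity, verify the occupation time formula \emph{first} by testing the $C^2$ formula against smooth $g$ via $G(x)=\int(x-a)^+g(a)\,\ud a$, and then pass to the limit in $f_n(Y_t)=f_n(Y_0)+\int_0^t f_n'(Y_u)\,\ud Y_u+\tfrac12\int L_t^a(Y)f_n''(a)\,\ud a$ using vague convergence of $f_n''(a)\,\ud a$ against $a\mapsto L_t^a(Y)$; this requires continuity of the local time in $a$, which the paper only establishes afterwards in Corollary \ref{cor:occupation_density} (its proof rests solely on the Tanaka representation you take as a definition, so there is no circularity, but it is an extra ingredient you must supply at this stage). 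The paper instead introduces, for each convex $f$, an abstract remainder process $A_f$ as the limit of the mollified formulas, shows $A_+^t(a)=A_-^t(a)$ by adding and subtracting the formulas for $(x-a)^{\pm}$, and then identifies $A_f^t=\int L_t^a(Y)\,f''(\ud a)$ through the representation $f(x)=\alpha x+\beta+\int|x-a|\,f''(\ud a)$ and Fubini; this needs only measurability and integrability of $a\mapsto L_t^a(Y)$ against $f''$, not its continuity, and derives the occupation time formula last. Two small points to tighten in your version: the mollifier should be one-sided (kernel supported in $(-\infty,0]$, as in the paper) so that $f_n'$ converges to the \emph{left} derivative $f'_-$ and so that the increments $|f_n'(Y_t)-f_n'(Y_s)|$ are dominated by the same level-crossing indicators used in Theorem \ref{main_thm:stat}, which is what makes the claimed convergence $\|f_n'(Y_\cdot)-f'_-(Y_\cdot)\|_{2,\beta}\to 0$ go through; and you should reduce to $f''$ with compact support at the outset (harmless since $Y$ is bounded on $[0,t]$), as the paper does, before invoking either the vague convergence or the Fubini argument.
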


\begin{proof}
Note first that Theorem \ref{thm:existence_summa} implies the existence of integrals as mixed Föllmer and generalized Lebesgue--Stieltjes integral.  By Taylor expansion we have the result for $f\in C^2$. For the general case we follow arguments for the semimartingale case(see \cite{r-y}, pp. 221--224).

Let now $f$ be a convex function for which the measure $f''$ has compact support and define
\begin{equation*}
f_n(x)= n\int_{-\infty}^0 f(x+y)j(ny)\,\ud y,
\end{equation*}
where $j(y)$ is a positive $C^\infty$-function with compact support in $(-\infty,0]$ such that 
\begin{equation*}
\int_{-\infty}^0 j(y)\,\ud y = 1.
\end{equation*}
Now $f_n$ is well-defined and smooth for every $n$. Moreover, $f_n$ converges to $f$ pointwise and $(f_n)'_-$ increases to $f'_-$. Now, 
\begin{equation*}
f_n(Y_t) = f_n(Y_0) + \int_0^t (f_n)'_-(Y_u)\,\ud Y_u + \frac{1}{2}\int_0^t f_n''(Y_u)\,\ud\la M \ra_u.
\end{equation*}
Moreover, by examining the proof of Theorem \ref{thm:ito} we can conclude that
\begin{equation*}
\int_0^t (f_n)'_-(Y_u)\ud X_u \rightarrow \int_0^t f'_-(Y_u)\,\ud X_u,
\end{equation*}
and for the It\^o integral we get by standard arguments that
\begin{equation*}
\int_0^t (f_n)'_-(Y_u)\,\ud M_u \rightarrow \int_0^t f'_-(Y_u)\,\ud M_u.
\end{equation*}
Consequently, we obtain that for every convex function $f$ there exists a process $A_f$ such that
\begin{equation*}
f(Y_t) = f(Y_0) + \int_0^t f'_-(Y_u) \,\ud Y_u + \frac{1}{2}A_f^t.
\end{equation*}
Applying this result to convex functions $(x-a)^+$ and $(x-a)^-$ we obtain
\begin{equation*}
(Y_t-a)^+ = (Y_0-a)^+ + \int_0^t \textbf{1}_{Y_u>a}\,\ud Y_u + \frac{1}{2}A_+^t(a)
\end{equation*}
and
\begin{equation*}
(Y_t-a)^- = (Y_0-a)^- - \int_0^t\textbf{1}_{Y_u\leq a}\,\ud Y_u + \frac{1}{2}A_-^t(a)
\end{equation*}
for some processes $A_+^t(a)$ and $A_-^t(a)$. Subtract the second equation from the first to get that
\begin{equation*}
A_+^t(a) = A_-^t(a)
\end{equation*}
almost surely, and we define the local time process $L_t^a(Y) = A_+^t(a)$. Moreover, adding the second equation to the first we get
\begin{equation*}
|Y_t - a| = |Y_0 - a| + \int_0^t \mathrm{sgn}(Y_u-a)\,\ud Y_u + L_t^a(Y).
\end{equation*}
In order to have (\ref{ito:sum}) for convex function $f$ for which $f''$ has compact support we use representations
$$
f(x) = \alpha x + \beta + \int_{-\infty}^\infty |x-a|\,f''(\ud a)
$$
and 
\begin{equation*}
f'_-(x) = \alpha + \int_{-\infty}^\infty \mathrm{sgn}(x-a)\,f''(\ud a).
\end{equation*}
By using the representation formula for the convex function $|Y_t -a|$ we obtain
\begin{eqnarray*}
f(Y_t) &=& f(Y_0) + \alpha(Y_t - Y_0) \\
& &+ \int_{-\infty}^\infty\frac{1}{2}\left(\int_0^t \mathrm{sgn}(Y_u-a)\,\ud Y_u + L_t^a(Y)\right)\,f''(\ud a).
\end{eqnarray*}
Hence, for convex functions $f$ for which $f''$ has compact support, the process $A_f$ is given by
\begin{eqnarray*}
A_f^t &=& \int_{-\infty}^\infty L_t^a(Y)\,f''(\ud a) + \int_{-\infty}^\infty\!\int_0^t \mathrm{sgn}(Y_u - a)\,\ud Y_u\,f''(\ud a)\\
& & - \int_0^t f'_-(Y_u)\,\ud Y_u + \alpha(Y_t - Y_0).
\end{eqnarray*}
It remains to apply Fubini's Theorem to stochastic integrals. For the martingale part we use classical Stochastic Fubini's Theorem (see \cite{r-y}) and for pathwise integral we can use classical Fubini's Theorem for $\sigma$-finite measures. Now, we obtain the occupation time formula by the same argument as in the semimartingale case.
\end{proof}

\begin{cor}
\label{cor:occupation_density}
Let $M$ and $X$ be processes such that assumptions of Theorem \ref{thm:existence_summa} are satisfied.  Then the local time $L_t^a(Y)$ of the process $Y = X + M$ is almost surely continuous in $t$ and in $a$ and we have the \emph{local representation}
$$
L_t^a(Y) = \lim_{\epsilon\to0+}\frac{1}{2\epsilon}
\int_0^t \mathbf{1}_{a-\epsilon < Y_u < a+\epsilon}\, \ud\la Y\ra_u
$$
almost surely.
\end{cor}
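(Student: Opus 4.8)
The plan is to read off both assertions from the Tanaka formula and the occupation time formula already established in Theorem \ref{thm:mg_ito}, treating continuity in $t$ and continuity in $a$ separately and deducing the local representation last.

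\emph{Continuity in $t$.} From the Tanaka formula $|Y_t - a| = |Y_0 - a| + \int_0^t \mathrm{sgn}(Y_u - a)\,\ud Y_u + L_t^a(Y)$ proved inside Theorem \ref{thm:mg_ito}, I would solve for the local time,
$$
L_t^a(Y) = |Y_t - a| - |Y_0 - a| - \int_0^t \mathrm{sgn}(Y_u - a)\,\ud Y_u .
$$
Since $Y = M + X$ is continuous and the F\"ollmer integral $\int_0^t \mathrm{sgn}(Y_u-a)\,\ud Y_u$ splits into a continuous It\^o integral against $M$ and a continuous generalized Lebesgue--Stieltjes integral against $X$, the right-hand side is continuous in $t$ for each fixed $a$.

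\emph{Continuity in $a$.} Here the plan is to apply Kolmogorov's continuity criterion to the two-parameter field $(t,a)\mapsto L_t^a(Y)$. For $a<b$ the Tanaka formula yields
$$
L_t^b(Y) - L_t^a(Y) = |Y_t - b| - |Y_t - a| - \big(|Y_0 - b| - |Y_0 - a|\big) + 2\int_0^t \mathbf{1}_{a < Y_u \le b}\,\ud Y_u ,
$$
so, the boundary terms being $1$-Lipschitz in $(a,b)$, everything reduces to moment bounds on $2\int_0^t \mathbf{1}_{a<Y_u\le b}\,\ud M_u + 2\int_0^t \mathbf{1}_{a<Y_u\le b}\,\ud X_u$. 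For the martingale part I would use the Burkholder--Davis--Gundy inequality to bound its $2p$-th moment by $C_p\,\E\big(\int_0^t \mathbf{1}_{a<Y_u\le b}\,\ud\la M\ra_u\big)^p$; since $\la Y\ra=\la M\ra$, the occupation time formula \eqref{otf} rewrites the bracket integral as $\int_a^b L_t^c(Y)\,\ud c$, which is of order $b-a$ and supplies the H\"older-in-$a$ estimate Kolmogorov requires. For the $X$-part I would exploit that $X$ is $\alpha$-H\"older with $\alpha>1/2$ together with the estimate of Theorem \ref{t:n-r}, controlling the generalized Lebesgue--Stieltjes integral by the fractional Besov norm of $\mathbf{1}_{a<Y_\cdot\le b}=\mathbf{1}_{Y_\cdot>a}-\mathbf{1}_{Y_\cdot>b}$, whose increments are exactly the level-crossing events already estimated in Theorem \ref{main_thm:stat}. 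This produces a jointly continuous modification. The main obstacle is precisely this $X$-part: the integrand is discontinuous in $u$ and $X$ is not a semimartingale, so the clean BDG route is unavailable and one must push the pathwise Besov estimates through while tracking the explicit dependence on $b-a$.

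\emph{Local representation.} Finally I would apply the occupation time formula \eqref{otf} to the bounded Borel function $g=\frac{1}{2\epsilon}\mathbf{1}_{(a-\epsilon,\,a+\epsilon)}$, giving
$$
\frac{1}{2\epsilon}\int_0^t \mathbf{1}_{a-\epsilon < Y_u < a+\epsilon}\,\ud\la Y\ra_u = \frac{1}{2\epsilon}\int_{a-\epsilon}^{a+\epsilon} L_t^c(Y)\,\ud c .
$$
By the continuity in $a$ just established, the right-hand side is the mean of a continuous function over a shrinking interval and hence converges to $L_t^a(Y)$ as $\epsilon\to0+$, which is exactly the asserted local representation.
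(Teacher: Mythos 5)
Your continuity-in-$t$ step and your derivation of the local representation from the occupation time formula \eqref{otf} are exactly the paper's argument. The crux of the corollary, however, is the continuity in $a$, and there your proposal both diverges from the paper and leaves the decisive estimate unproved. You route the argument through Kolmogorov's continuity criterion, which requires a moment bound of the form $\E\left|\int_0^t \mathbf{1}_{a<Y_u\le b}\,\ud X_u\right|^{2p}\le C|b-a|^{1+\epsilon}$ for the non-semimartingale part; you acknowledge yourself that you cannot produce this, and indeed the level-crossing estimates of Lemma \ref{lma:crossing2} control $\P(X_s<c<X_t)$ in terms of $t-s$, not in terms of the width $b-a$ of the level band, so no quantitative H\"older-in-$a$ rate is available from the tools developed in the paper. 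Your martingale step has a smaller gap of the same kind: bounding $\E\big(\int_a^b L_t^c(Y)\,\ud c\big)^p$ by $C(b-a)^p$ presupposes $\sup_c\E\big[(L_t^c(Y))^p\big]<\infty$, which has not been established. Finally, Kolmogorov's criterion would only produce a continuous \emph{modification} of the two-parameter field, which you would still need to identify with the local time defined through the Tanaka formula before the averaging argument in your last step applies.

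The paper avoids all of this with a purely pathwise argument: by the Tanaka formula for $(Y_t-a)^+$ it suffices to show that $(t,a)\mapsto\int_0^t\mathbf{1}_{Y_u>a}\,\ud Y_u$ is continuous, and by the deterministic estimate of Theorem \ref{t:n-r} the increment of the generalized Lebesgue--Stieltjes part is bounded by ${\Vert \mathbf{1}_{a<Y_\cdot\le b}\mathbf{1}_{\cdot\le t}\Vert}_{2,\beta}$, which tends to zero almost surely as $b\to a$ by dominated convergence, the integrable majorant being exactly the one constructed in the proof of Theorem \ref{main_thm:stat}. No moment estimates and no Kolmogorov criterion are needed, and one obtains continuity of the actual process rather than of a modification. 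If you want to salvage your plan, replace the Kolmogorov step by this dominated-convergence argument applied to the fractional Besov norm of $\mathbf{1}_{a<Y_\cdot\le b}$; the rest of your proposal then goes through as written.
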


\begin{proof}
Obviously $(Y_t - a)^+$ is continuous in $t$ and $a$. Hence it is enough to show that the stochastic integral 
\begin{equation*}
\int_0^t \textbf{1}_{Y_u > a}\,\ud Y_u
\end{equation*}
is continuous in $t$ and $a$. For this it is enough to show that
\begin{equation*}
{\|\mathbf{1}_{X_u>a}\mathbf{1}_{u\leq t}\|}_{2,\beta}
\end{equation*}
is continuous in $t$ and $a$. But continuity in $t$ is evident and the continuity in $a$ follows from Lebesgue dominated convergence theorem. 

Finally, the local representation follows from the continuity of the local time in $a$ and the occupation time formula (\ref{otf}). 
\end{proof}

\begin{rmk}
\begin{enumerate}
\item
In the case of a standard Brownian motion $W$ with $g(x)=\mathbf{1}_A$ the occupation time formula reads
that 
$$
\int_A L_t^a(W)\,\ud a = \int_0^t \mathbf{1}_{\{W_u\in A\}}\,\ud u.
$$
So the local time is exactly the density with respect to Lebesgue measure for the occupation measure.  In the It\^o--Tanaka formula the local time for $Y$ is the density of the occupation measure with respect to a ``clock'' $\ud\la Y\ra$.  
\item
Berman \cite{b} showed that a Gaussian process $X$ admits a local time as occupation density with respect to 
the ``Lebesgue clock'' $\ud u$ if and only if its incremental variance $W$ satisfies
$$
\int_0^T\!\int_0^T \frac{1}{\sqrt{W(t,s)}}\,\ud s\ud t <\infty.
$$
Consequently, every stationary or stationary increment process $X\in\mathcal{X}^{\alpha}$ admits a local time defined as the density of the occupation 
measure with respect to the ``Lebesgue clock''. If $\alpha>1/2$, then $L_T^a(X) =0$.
\item
In Corollary \ref{cor:occupation_density} we have 
$$
\ud\la Y\ra_u = \ud\la M\ra_u = \left(\la M\ra'_-\right)_u\ud u.
$$
\item
Again with similar arguments we can obtain the It\^o--Tanaka formula for transformation $S=g(Y)$ with obvious changes.
\end{enumerate}
\end{rmk}

\section{Implications to Option-Pricing}
\label{sect:finance}

We explain implications of the It\^o--Tanaka formula for pricing and hedging of European options in the spirit of Sondermann \cite[Ch. 6]{Sondermann}.  For the use of non-semimartingales and pathwise Föllmer integration in mathematical finance we refer to \cite{b-s-v}.

Let $S$ be the discounted stock-price process given by dynamics
$$
\frac{\ud S_t}{S_t} = \mu(t)\ud t + \ud Y_t, 
$$
where $Y$ is a centered continuous quadratic variation process. Then, by \cite{b-s-v},
$$
S_t = S_0\exp\left\{\int_0^t \mu(u)\,\ud u + Y_t - \frac12\la Y\ra_t\right\}.
$$

Suppose we want to replicate a European call-option $(S_T-K)^+$ and suppose $Y\in\mathcal{X}^\alpha$ for some $\alpha>1/2$. Then $\la Y\ra=0$, and the It\^o--Tanaka formula takes the form
$$
(S_T-K)^+ = (S_0-K)^+ + \int_0^T \mathbf{1}_{S_u\ge K}\, \ud S_u.
$$
This means that one can replicate the European call-option $(S_T-K)^+$ with a fair price $(S_0-K)^+$ by dynamically buying at time $t$ one share of the stock if the option goes from out-of-the-money into in-the-money and selling the stock if the option goes from out-of-the-money into in-the-money. But this is silly because of at least two reasons:
\begin{enumerate}
\item
Take $K>S_0$. Then out-of-the-money options are worthless. This is obviously nonsense!
\item
Take $K=S_0$.  Then one could make profit without risk by selling cheap and buying expensive.  This is against any reasonable business sense! 
\end{enumerate}
Thus, we see that in order to avoid this \emph{buy--sell paradox} the option-pricing model must include a quadratic variation. 

Suppose then that $Y=M+X$ such that the assumption of Theorem \ref{thm:mg_ito} are satisfied.  Then the It\^o--Tanaka formula takes the form
\begin{equation}\label{eq:hedge}
(S_T-K)^+ = (S_0-K)^+ + \int_0^T \mathbf{1}_{S_u\ge K}\, \ud S_u 
+\frac12 L_T^K(S).
\end{equation}
This solves the buy--sell paradox.  Indeed, now the buy--sell strategy $\mathbf{1}_{S_u\ge K}$ is no longer a self-financing hedging strategy for the 
call-option $(S_T-K)^+$. 

\begin{rmk}
\begin{enumerate}
\item 
The local time $\frac12 L_T^K(S)$ in the hedging formula (\ref{eq:hedge}) can be interpreted as transaction costs in a following way noted by Sondermann \cite{Sondermann}: Assume that one tries to apply the buy--sell strategy $\mathbf{1}_{S_u\ge K}$ to hedge the European call-option $(S_T-K)^+$, i.e., buy the stock at price $K$ when up-crossing the barrier $K$, sell it again when down-crossing the barrier. But you cannot sell it at the same price. You need a so-called \emph{cutout}. You can place only limit orders of the form: buy at $K$, sell at $K-\epsilon$ for some $\epsilon>0$. The smaller you choose $\epsilon$, the more cutouts you will face, and in the limit the sum of these cutouts is just equal to transaction costs.  
\item
The true hedging strategy for the European call-option in the models $Y=M+X$ satisfying the assumptions of Theorem \ref{thm:mg_ito} can be calculated just like in the martingale case $Y=M$ by using the Black--Scholes BPDE.  See \cite{b-s-v} for details. 
\end{enumerate}
\end{rmk}

\appendix
\section{Level-Crossing Lemma}

The key lemma in our analysis is the following estimate for the probability that a Gaussian process $X$ crosses a fixed level.  Actually, in \cite{a-v} the authors proved the lemma in the particular case of fractional Brownian motion. We extend the result here for more general Gaussian process. We consider only probability $\P(X_s < a < X_t)$ and a case $\sup_{s\leq T} V(s) \leq 1$.  However, by considering processes $Y = -X$ and $Y = \frac{X}{\sqrt{V^*}}$ we obtain same bound for $\P(X_s > a > X_t)$ and for the general 
case $\sup_{s\leq T} V(s) < \infty$.  Also, note that continuous Gaussian processes on compact time intervals satisfy $V^*<\infty$. 

\begin{lma}
\label{lma:crossing}
Let $X$ be a Gaussian process with positive covariance function $R$. Denote
\begin{equation*}
\sigma^2 = \frac{R(s,s)R(t,t) - R(t,s)^2}{R(s,s)},
\end{equation*}
and fix $0 < s < t \le T$ and $ a \in \R$. Assume that the variance function satisfies
\begin{equation*}
V^* := \sup_{s_\leq T} V(s) \leq 1.
\end{equation*}
\begin{enumerate}
\item
If
\begin{equation*}
\frac{R(s,s)}{R(t,s)}(a-1) < a,
\end{equation*}
then there exists a constant $C$, independent of $s$, $t$, $T$ and $a$, such that 
\begin{equation*}
\P \big( X_s < a < X_t\big) \leq I_1 + I_2 + I_3 +I_4,
\end{equation*}
where
\begin{eqnarray*}
I_1 &\leq& C \min[\sqrt{V(s)}\sigma,\sigma^2] e^{-\frac{a^2}{2}}, \\
I_2 &\leq& Ce^{-\frac{\min[a^2,(a-1)^2]}{2V^*}}\frac{\sigma}{\sqrt{V(s)}}\left[\mathbf{1}_{|a|>2}+\left(a-\frac{R(s,s)}{R(t,s)}(a-1)\right)\mathbf{1}_{|a|\leq 2}\right], \\
I_3 &\leq& C \frac{R(s,s)}{R(t,s)}\frac{\sigma}{\sqrt{V(s)}}e^{-\frac{\min[a^2,(a-1)^2]}{2V^*}}, \\
I_4 &\leq& e^{-\frac{a^2}{2V^*}}\frac{1}{\sqrt{V(s)}}\left|a\left(1-\frac{R(s,s)}{R(t,s)}\right)\right|,
\end{eqnarray*}
\item
If 
\begin{equation*}
\frac{R(s,s)}{R(t,s)}(a-1) \geq a,
\end{equation*}
then there exists a constant $C$, independent of $s$, $t$, $T$ and $a$, such that 
\begin{equation*}
\P \big( X_s < a <X_t \big) \leq C \min[\sqrt{V(s)}\sigma,\sigma^2] e^{-\frac{a^2}{2}}.
\end{equation*}
\end{enumerate}
\end{lma}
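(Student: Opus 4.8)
The plan is to reduce the bivariate Gaussian probability to a one-dimensional integral by conditioning on $X_s$. Writing $\rho = R(t,s)/R(s,s)$ for the regression coefficient, the Gaussian vector $(X_s,X_t)$ may be represented as $X_t = \rho X_s + \sigma Z$, where $Z$ is standard normal and independent of $X_s$; a direct computation of the conditional variance shows that the dispersion is exactly $\sigma^2 = R(t,t)-R(t,s)^2/R(s,s)$ as defined in the statement. On $\{X_s<a\}$ the event $\{X_t>a\}$ then reads $\{\sigma Z > a-\rho X_s\}$, so that, with $\bar\Phi$ the standard Gaussian tail,
\begin{equation*}
\P(X_s<a<X_t) = \int_{-\infty}^{a} \bar\Phi\!\left(\frac{a-\rho x}{\sigma}\right)\frac{1}{\sqrt{2\pi V(s)}}\,e^{-\frac{x^2}{2V(s)}}\,\ud x .
\end{equation*}
The entire lemma then becomes the problem of estimating this single integral, uniformly in $s,t,T$ and $a$, under the normalization $V^*\le 1$ which guarantees $V(s)\le 1$ and $\sigma^2\le R(t,t)\le 1$.

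The next step is to split the range $(-\infty,a)$ according to the position of the balance point $x_0 = a/\rho = a\,R(s,s)/R(t,s)$, at which the conditional mean $\rho x$ crosses the level $a$, and hence where the tail factor $\bar\Phi((a-\rho x)/\sigma)$ switches from being exponentially small to being of order one. The dichotomy in the statement is precisely the geometric dichotomy governing whether this switch occurs inside the integration window: the inequality $\frac{R(s,s)}{R(t,s)}(a-1)\ge a$ forces $x_0-a \ge R(s,s)/R(t,s)$, so that on the whole range $x<a$ the argument $(a-\rho x)/\sigma$ stays above $1/\sigma\ge 1$. In that regime (case (ii)) a single application of $\bar\Phi(z)\le \tfrac12 e^{-z^2/2}$, combined with the elementary identity $\sigma^2 \le W(t,s)$ coming from $W(t,s)-\sigma^2 = (R(s,s)-R(t,s))^2/R(s,s)\ge 0$, yields the clean bound $C\min[\sqrt{V(s)}\sigma,\sigma^2]e^{-a^2/2}$, with the minimum reflecting whether the Gaussian density or its tail is the binding factor.

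In case (i) the balance point lies inside or near the window and the integral must be partitioned into the four regions producing $I_1,\dots,I_4$. On the region where $x$ is near $a$ and the two Gaussian scales $\sqrt{V(s)}$ and $\sigma$ compete, I would use the sharper tail bound $\bar\Phi(z)\le C\min(1,z^{-1})e^{-z^2/2}$ and expand the quadratic $x^2/(2V(s))$ about $x=a$; this produces $I_1$ together with its $\min[\sqrt{V(s)}\sigma,\sigma^2]$ factor. The intermediate regions give $I_2$ and $I_3$: here one bounds $\bar\Phi$ by its Gaussian envelope and integrates the residual density against elementary exponential estimates, which accounts for the factor $\sigma/\sqrt{V(s)}$, the ratio $R(s,s)/R(t,s)$, and the split between moderate and large levels encoded by $\mathbf{1}_{|a|\le 2}$ and $\mathbf{1}_{|a|>2}$. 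Finally, $I_4$ is the contribution of the short interval between the balance point $x_0$ and the endpoint $a$: there the tail is only of order one but the region has length $|x_0-a| = |a|\,|1-R(s,s)/R(t,s)|$ and the density is at most $\frac{1}{\sqrt{V(s)}}e^{-a^2/(2V^*)}$, giving exactly $I_4$.

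The main obstacle is not any single estimate but keeping the constant $C$ genuinely uniform while tracking the competition between the two Gaussian scales and the ratio $R(s,s)/R(t,s)$, which may be either small or large. The delicate point is the region producing $I_1$, where one must decide case-by-case whether the density factor (of order $\sqrt{V(s)}\sigma$) or the tail factor (of order $\sigma^2$) dominates, so as to obtain the minimum rather than a crude product; this is where a careful Taylor expansion near the crossing point, rather than a blunt supremum bound, is essential. Once the four pieces are in place, substituting the estimates of Lemma \ref{lma:technical} for the factors $1-R(s,s)/R(t,s)$ and $R(s,s)/R(t,s)-1$ is exactly what converts these bounds into the universal form used in Lemma \ref{lma:crossing2}.
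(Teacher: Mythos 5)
Your overall strategy coincides with the paper's: condition on $X_s$ via the regression decomposition $X_t=\frac{R(t,s)}{R(s,s)}X_s+\sigma Y$ with $Y$ standard normal independent of $X_s$, reduce to a one-dimensional integral over $x\in(-\infty,a)$ of the Gaussian tail $\P\bigl(Y\ge A(x)\bigr)$ with $A(x)=\bigl(a-\tfrac{R(t,s)}{R(s,s)}x\bigr)/\sigma$ against the $N(0,V(s))$ density, and split the range at the points $\frac{R(s,s)}{R(t,s)}(a-1)$ and $\frac{R(s,s)}{R(t,s)}a$. Your reading of the dichotomy (in case (ii) one has $A(x)\ge 1/\sigma\ge 1$ on the whole range) and your account of $I_4$ as a short-interval contribution of length $|a|\,|1-R(s,s)/R(t,s)|$ are both correct and match the paper. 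Two small misstatements: the term the paper calls $I_1$ is the contribution of the \emph{far} region $x<\frac{R(s,s)}{R(t,s)}(a-1)$, not of "the region where $x$ is near $a$"; and the inequality $\sigma^2\le W(t,s)$ plays no role in this lemma --- it is only invoked afterwards, in Lemma \ref{lma:crossing2}, to convert $\sigma$ into $\sqrt{W(t,s)}$.

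The genuine gap is in the central estimate, namely the bound $C\min[\sqrt{V(s)}\sigma,\sigma^2]e^{-a^2/2}$ for $I_1$, which is also the whole of case (ii). First, the crude bound $\bar\Phi(z)\le\frac12 e^{-z^2/2}$ cannot yield a product of two small factors: you must use the Mills-ratio bound $\bar\Phi(z)\le Cz^{-1}e^{-z^2/2}$ together with $A(x)\ge 1/\sigma$ on this region to harvest one explicit factor of $\sigma$; the remaining factor $\min[\sqrt{V(s)},\sigma]$ (up to the covariance ratio) then comes from the width of the recentred Gaussian relative to the normalization $1/\sqrt{V(s)}$. Second, "expanding the quadratic about $x=a$" is not the right operation and will not isolate the constant $e^{-a^2/2}$: the paper completes the square \emph{exactly} in the combined exponent $-\frac{A(x)^2}{2}-\frac{x^2}{2V(s)}$ (after the substitution $x=\frac{R(s,s)}{R(t,s)}(a-y)$, with width $\bar\sigma$ given by $\bar\sigma^{-2}=\sigma^{-2}+R(s,s)R(t,s)^{-2}$) and then checks that the leftover constant term is at most $-\frac{a^2}{2}$. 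That check is precisely where the hypothesis $V^*\le 1$ enters --- the minimum of the combined exponent is $-a^2/(2R(t,t))$, which is $\le -a^2/2$ only because $R(t,t)\le 1$ --- and it is the only place in the lemma where this hypothesis is used; your sketch asserts that the normalization "guarantees" the bound but never closes this step. With the Mills factor and the exact completion of the square supplied, the rest of your decomposition into $I_2$, $I_3$, $I_4$ goes through essentially as you describe.
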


In the proof we use the following well-known estimate.
\begin{lma}
\label{standard_estimate}
Let $Z$ be a standard normal random variable and fix $a>0$. Then
\begin{equation*}
\P\big(Z>a\big) \leq \frac{1}{\sqrt{2\pi}a}e^{-\frac{a^2}{2}}.
\end{equation*}
\end{lma}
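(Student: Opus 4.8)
The plan is to bound the Gaussian tail integral by inserting a harmless factor that turns the integrand into an exact derivative. Writing the tail probability explicitly,
\begin{equation*}
\P(Z > a) = \frac{1}{\sqrt{2\pi}} \int_a^\infty e^{-x^2/2}\,\ud x,
\end{equation*}
the only real difficulty is that $e^{-x^2/2}$ has no elementary antiderivative, so a direct evaluation is not available and we must settle for an explicit upper bound.

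First I would exploit the fact that on the region of integration we have $x \ge a > 0$, and hence $x/a \ge 1$. Multiplying the (nonnegative) integrand by this factor can only increase the integral, so
\begin{equation*}
\int_a^\infty e^{-x^2/2}\,\ud x \;\le\; \int_a^\infty \frac{x}{a}\, e^{-x^2/2}\,\ud x \;=\; \frac{1}{a}\int_a^\infty x\, e^{-x^2/2}\,\ud x.
\end{equation*}
The point of the manoeuvre is that the new integrand is an exact derivative, namely $x\,e^{-x^2/2} = -\frac{\ud}{\ud x}\, e^{-x^2/2}$, which makes the remaining integral elementary.

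Next I would evaluate the antiderivative at the endpoints, obtaining $\int_a^\infty x\,e^{-x^2/2}\,\ud x = e^{-a^2/2}$, whence $\int_a^\infty e^{-x^2/2}\,\ud x \le \tfrac{1}{a}\, e^{-a^2/2}$. Dividing through by $\sqrt{2\pi}$ yields the claimed estimate $\P(Z>a) \le \frac{1}{\sqrt{2\pi}\,a}\, e^{-a^2/2}$. There is no genuine analytic obstacle here; the single idea is the insertion of the factor $x/a$, which is precisely what is needed because $a>0$ guarantees $x/a\ge 1$ on $[a,\infty)$. I would remark that the bound is asymptotically sharp, since the ratio of the two sides converges to $1$ as $a \to \infty$, which is exactly the regime in which the estimate is applied in Lemma \ref{lma:crossing}.
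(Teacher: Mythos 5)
Your proof is correct: inserting the factor $x/a \ge 1$ on the domain of integration and integrating the exact derivative $x\,e^{-x^2/2} = -\frac{\ud}{\ud x}e^{-x^2/2}$ gives precisely the claimed bound. The paper itself offers no proof of this lemma at all --- it is invoked as a well-known estimate --- and the argument you give is the standard one that would be supplied, so there is nothing to contrast; your closing remark on asymptotic sharpness (the Mills-ratio fact that the two sides are equivalent as $a\to\infty$) is accurate but not needed for the lemma or for its use in Lemma \ref{lma:crossing}.
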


\begin{proof}[Proof of Lemma \ref{lma:crossing}]
We make use of decomposition
\begin{equation*}
X_t = \frac{R(t,s)}{R(s,s)}X_s + \sigma Y,
\end{equation*}
where $Y$ is $N(0,1)$ random variable independent of $X_s$ and 
\begin{equation*}
\sigma^2 = \frac{R(t,t)R(s,s)-R(t,s)^2}{R(s,s)}.
\end{equation*}
Assume that 
$$
\frac{R(s,s)}{R(t,s)}(a-1) < a.
$$ 
Then we obtain 
\begin{eqnarray*}
\lefteqn{\P(X_s<a<X_t)}\\ 
&=& 
\int_{-\infty}^a \P\left(Y\geq \frac{a-\frac{R(t,s)}{R(s,s)}x}{\sigma}\right)\frac{1}{\sqrt{2\pi}\sqrt{V(s)}}e^{-\frac{x^2}{2V(s)}}\,\ud x\\
&=& \int_{-\infty}^{\frac{R(s,s)}{R(t,s)}(a-1)} \P\left(Y\geq \frac{a-\frac{R(t,s)}{R(s,s)}x}{\sigma}\right)\frac{1}{\sqrt{2\pi}\sqrt{V(s)}}e^{-\frac{x^2}{2V(s)}}\,\ud x\\
& &+ \int_{\frac{R(s,s)}{R(t,s)}(a-1)}^{a} \P\left(Y\geq \frac{a-\frac{R(t,s)}{R(s,s)}x}{\sigma}\right)\frac{1}{\sqrt{2\pi}\sqrt{V(s)}}e^{-\frac{x^2}{2V(s)}}\,\ud x\\
&=& I_1 + A_1.
\end{eqnarray*}
Moreover, if 
$$
\frac{R(s,s)}{R(t,s)}(a-1) \geq a,
$$ 
then
$$
\P(X_s<a<X_t) \leq I_1.
$$
Note that here $I_1$ corresponds the one given in the Lemma and $A_1$ contains $I_2$, $I_3$ and $I_4$. We shall use similar technique for the rest of the proof.

We begin with $I_1$.  By Lemma \ref{standard_estimate} we have
$$
\P\left(Y\geq \frac{a-\frac{R(t,s)}{R(s,s)}x}{\sigma}\right) \leq \frac{1}{\sqrt{2\pi}A(x)}e^{-\frac{A(x)^2}{2}},
$$
where $A(x) = \frac{a-\frac{R(t,s)}{R(s,s)}x}{\sigma}$. Hence
\begin{eqnarray*}
I_1 &\leq& 
\int_{-\infty}^{\frac{R(s,s)}{R(t,s)}(a-1)} \frac{1}{\sqrt{2\pi}A(x)}e^{-\frac{A(x)^2}{2}}\frac{1}{\sqrt{2\pi}\sqrt{V(s)}}e^{-\frac{x^2}{2V(s)}}\,\ud x\\
&\leq& \frac{\sigma}{\sqrt{V(s)}}e^{-\frac{a^2}{2}}\int_{-\infty}^{\frac{R(s,s)}{R(t,s)}(a-1)} \frac{1}{2\pi}e^{-\frac{A(x)^2}{2}-\frac{x^2}{2V(s)} + \frac{a^2}{2}}\,\ud x\\
&=& \frac{R(s,s)}{R(t,s)}\frac{\sigma}{\sqrt{V(s)}}e^{-\frac{a^2}{2}}\int_{1}^{\infty} \frac{1}{2\pi}e^{-\frac{y^2}{2\sigma^2}-\frac{\left[\frac{R(s,s)}{R(t,s)}(a-y)\right]^2}{2V(s)} + \frac{a^2}{2}}\,\ud y
\end{eqnarray*}
We proceed to study the integral. Now,
\begin{eqnarray*}
\lefteqn{-\frac{y^2}{2\sigma^2}-\frac{\left[\frac{R(s,s)}{R(t,s)}(a-y)\right]^2}{2V(s)} + \frac{a^2}{2} }\\
&=& -\frac{1}{2\bar{\sigma}^2}\left[\left(y-a\frac{R(s,s)}{R(t,s)^2}\bar{\sigma}^2\right)^2 + a^2\left(\frac{R(s,s)}{R(t,s)^2}\bar{\sigma}^2 - \bar{\sigma}^2 - \frac{R(s,s)^2}{R(t,s)^4}\bar{\sigma}^4\right)\right],
\end{eqnarray*}
where 
\begin{equation*}
\frac{1}{\bar{\sigma}^2}=\frac{1}{\sigma^2} + \frac{R(s,s)}{R(t,s)^2}.
\end{equation*}
Now 
\begin{equation*}
\frac{1}{\bar{\sigma}^2}\geq 1
\end{equation*}
and since $V^*\leq 1$, we also have
\begin{equation*}
\left(\frac{R(s,s)}{R(t,s)^2}\bar{\sigma}^2 - \bar{\sigma}^2 - \frac{R(s,s)^2}{R(t,s)^4}\bar{\sigma}^4\right) \geq 0.
\end{equation*}
Thus,
\begin{eqnarray*}
\lefteqn{\int_{1}^{\infty} \frac{1}{2\pi}e^{-\frac{y^2}{2\sigma^2}-\frac{\left[\frac{R(s,s)}{R(t,s)}(a-y)\right]^2}{2s^{2H}} + \frac{a^2}{2}}\,\ud y}\\
&\leq& \int_{1}^{\infty} \frac{1}{2\pi}e^{-\frac{1}{2\bar{\sigma}^2}\left(y-a\frac{R(s,s)}{R(t,s)^2}\bar{\sigma}^2\right)^2}\,\ud y\\
&\leq& \frac{\bar{\sigma}}{\sqrt{2\pi}}.
\end{eqnarray*}
Hence, we obtain for $I_1$ that
\begin{equation*}
I_1 \leq C\frac{R(s,s)}{R(t,s)}\frac{\sigma}{\sqrt{V(s)}}e^{-\frac{a^2}{2}}\bar{\sigma}. 
\end{equation*}
Now we have
\begin{equation*}
\bar{\sigma}^2 = \frac{\sigma^2R(t,s)^2}{\sigma^2+R(s,s)},
\end{equation*}
and hence for $I_1$ there exists a constant $C$ such that
\begin{equation*}
I_1 \leq C\min[\sqrt{V(s)}\sigma,\sigma^2]e^{-\frac{a^2}{2}}.
\end{equation*}
For the term $A_1$ we have
\begin{eqnarray*}
A_1 &=& \int_{\frac{R(s,s)}{R(t,s)}(a-1)}^{a} \int_{A(x)}^\infty \frac{1}{\sqrt{2\pi}}e^{-\frac{y^2}{2}}\,\ud y\frac{1}{\sqrt{2\pi}\sqrt{V(s)}}e^{-\frac{x^2}{2V(s)}}\,\ud x\\
& =& \int_{\frac{R(s,s)}{R(t,s)}(a-1)}^{a} \int_{A(x)}^{\frac{1}{\sigma}}\ldots \ud y\ud x + \int_{\frac{R(s,s)}{R(t,s)}(a-1)}^{a} \int_{\frac{1}{\sigma}}^\infty\ldots\ud y\ud x\\
&=& A_{2}+I_{2}. 
\end{eqnarray*}
Consider then $I_2$. Applying Lemma \ref{standard_estimate} we obtain
\begin{equation*}
I_2 \leq C \frac{\sigma}{\sqrt{V(s)}}e^{-\frac{1}{2\sigma^2}}\int_{\frac{R(s,s)}{R(t,s)}(a-1)}^{a} e^{-\frac{x^2}{2V(s)}}\,\ud x.
\end{equation*}
Note that $\sigma^2\geq 0$. Therefore,
\begin{equation*}
\frac{R(s,s)}{R(t,s)^2}\geq \frac{1}{R(t,t)}\geq \frac{1}{V^*}.
\end{equation*}
Now if $|a|> 2$, we can apply Lemma \ref{standard_estimate} to obtain
\begin{equation*}
\int_{\frac{R(s,s)}{R(t,s)}(a-1)}^{a} e^{-\frac{x^2}{2V(s)}}\,\ud x \leq Ce^{-\frac{\min[a^2,(a-1)^2]}{2V^*}}.
\end{equation*}
As a consequence, we obtain the required upper bound for $I_2$. Now, if $|a| \leq 2$ we obtain
\begin{equation*}
\int_{\frac{R(s,s)}{R(t,s)}(a-1)}^{a} e^{-\frac{x^2}{2V(s)}}\,\ud x \leq Ce^{-\frac{\min[a^2,(a-1)^2]}{2V^*}}\left[a-\frac{R(s,s)}{R(t,s)}(a-1)\right].
\end{equation*}
To conclude we study the term $A_2$. If we have
\begin{equation*}
\frac{R(s,s)}{R(t,s)}a < a,
\end{equation*}
then by applying the Tonelli theorem we obtain
\begin{eqnarray*}
A_2 &=& \int_{\frac{R(s,s)}{R(t,s)}(a-1)}^{a} \int_{A(x)}^{\frac{1}{\sigma}} \frac{1}{\sqrt{2\pi}}e^{-\frac{y^2}{2}}\,\ud y\frac{1}{\sqrt{2\pi}\sqrt{V(s)}}e^{-\frac{x^2}{2V(s)}}\,\ud x\\
&=& \int_{(\left[1-\frac{R(t,s)}{R(s,s)}\right]\frac{a}{\sigma}}^{\frac{1}{\sigma}}\int_{(a-\sigma y)\frac{R(s,s)}{R(t,s)}}^a \ldots \ud x\ud y\\
&=& \int_{(\left[1-\frac{R(t,s)}{R(s,s)}\right]\frac{a}{\sigma}}^{\frac{1}{\sigma}}\int_{(a-\sigma y)\frac{R(s,s)}{R(t,s)}}^{\frac{R(s,s)}{R(t,s)}a}\ldots \ud x \ud y + \int_{(\left[1-\frac{R(t,s)}{R(s,s)}\right]\frac{a}{\sigma}}^{\frac{1}{\sigma}}\int_{\frac{R(s,s)}{R(t,s)}a}^a\ldots\ud x\ud y\\
&=& I_3 + I_4.
\end{eqnarray*}
Moreover, if 
\begin{equation*}
\frac{R(s,s)}{R(t,s)}a \geq a,
\end{equation*}
then
\begin{equation*}
A_2 \leq I_3.
\end{equation*}
Now, for $I_3$ we have
\begin{eqnarray*}
\lefteqn{\int_{(\left[1-\frac{R(t,s)}{R(s,s)}\right]\frac{a}{\sigma}}^{\frac{1}{\sigma}}\int_{(a-\sigma y)\frac{R(s,s)}{R(t,s)}}^{\frac{R(s,s)}{R(t,s)}a}e^{-\frac{y^2}{2}}\frac{1}{\sqrt{V(s)}}e^{-\frac{x^2}{2V(s)}} \,\ud x \ud y }\\
&\leq& C\frac{\sigma}{\sqrt{V(s)}}e^{-\frac{\min[a^2,(a-1)^2]}{2V^*}}
\frac{R(s,s)}{R(t,s)}\int_{-\infty}^{\infty}|y|e^{-\frac{y^2}{2}}\,\ud y.
\end{eqnarray*}
Hence, we have the required upper bound for $I_3$. To conclude, note that for $I_4$ we have
\begin{eqnarray*}
I_4 &=& \int_{(\left[1-\frac{R(t,s)}{R(s,s)}\right]\frac{a}{\sigma}}^{\frac{1}{\sigma}}\int_{\frac{R(s,s)}{R(t,s)}a}^ae^{-\frac{y^2}{2}}\frac{1}{\sqrt{V(s)}}e^{-\frac{x^2}{2V(s)}} \,\ud x \ud y\\
&\leq& C\frac{1}{\sqrt{V(s)}}e^{-\frac{a^2}{2V^*}}|a|\left|1-\frac{R(s,s)}{R(t,s)}\right|.
\end{eqnarray*}
This finishes the proof of Lemma \ref{lma:crossing}.
\end{proof}


\bibliographystyle{plain}      
\bibliography{bibli}   
\end{document}